\tikzstyle{Dotty}=[fill=black, draw=black, shape=circle, tikzit shape=circle]
\tikzstyle{gyyu}=[fill=cyan, draw=black, shape=circle, tikzit shape=circle]
\tikzstyle{doty}=[fill=black, draw=black, shape=circle, tikzit fill=black, tikzit draw=black, tikzit shape=circle]
\tikzstyle{eddt}=[fill=none, draw=black, tikzit draw=black, ->]
\tikzstyle{new edge style 0}=[-, draw=red, tikzit draw=red]
\tikzstyle{ArrowBlack}=[draw=none, tikzit draw=none, ->, fill=white]
\newenvironment{customthm}[1]
{\innercustomthm}
{\endinnercustomthm}
\newtheorem{theorem}{Theorem}[section]
\newtheorem{proposition}{Proposition} [section]
\newtheorem{definition}{Definition}[section]
\newtheorem{lemma}{Lemma}[section]
\numberwithin{equation}{section}
\newtheorem{remark}{Remark}[section]
\author{Sakil Ahamed}
\address{Sakil Ahamed, Debanjit Mondal \newline \indent Department of Mathematics \& Statistics, Indian Institute of Science Education and Research Kolkata, \newline \indent 
	Mohanpur, Nadia, West Bengal- 741246, India.}
\email{sakilahamed777@gmail.com, wrmarit@gmail.com}
\author{Debanjit Mondal}
\begin{document}

\title[GLOBAL CONTROLLABILITY OF THE KAWAHARA EQUATION]{GLOBAL CONTROLLABILITY OF THE KAWAHARA EQUATION AT ANY TIME}

\begin{abstract}  
	In this article, we prove that the nonlinear Kawahara equation on the periodic domain \(\mathbb{T}\) (the unit circle in the plane) is globally approximately controllable in \(H^s(\mathbb{T})\) for \(s \in \mathbb{N}\), at any time \(T > 0\), using a two-dimensional control force. The proof is based on the Agrachev-Sarychev approach in geometric control theory.  
\end{abstract}

\keywords{ Kawahara equation, Approximate controllability, Bourgain space, Geometric control theory. }
\subjclass{ 93B05, 35Q35, 35Q53. }
\allowdisplaybreaks
\maketitle


\section{Introduction}
The Kawahara equation \cite{Hunter_1988,kawahara_1972},
\begin{equation}\label{equkaw}
	u_t + \alpha u_{xxxxx} + \beta u_{xxx} + 3\gamma u u_x = 0,
\end{equation}
where \(\alpha, \beta, \gamma \in \mathbb{R}\) and \(\alpha, \gamma \neq 0\), generalizes the KdV equation by incorporating the fifth-order dispersive term. Kawahara \cite{kawahara_1972} originally studied this equation to describe solitary waves in scenarios where third-order dispersion is insufficient. Like the KdV equation, the Kawahara equation supports compressive or rarefactive solitary waves, determined by the sign of the dispersive terms. The equation has applications in modeling surface tension effects in shallow water and in describing critical magnetic-acoustic wave behavior in plasmas. It is also commonly referred to as the fifth-order KdV equation in the literature.

Since the late 1980s, the control theory of nonlinear dispersive wave equations has attracted considerable attention, fueled by rapid advances in their mathematical theory. The development of new analytical tools has enabled researchers to address previously intractable problems. Notably, the control theory of the KdV equation has been extensively studied, building on its mathematical progress, with significant contributions from various researchers (see \cite{Coron_2004,Chen_2023,Rosier_10,Rosier_Zhang_2009,Russell_Zhang_1996,Russell_Zhang_1993, Zang_1999}). The authors recommend the survey article \cite{Cerpa_2014}   and the references therein  for a thorough review of the well-posedness and controllability results of the KdV equation.

In contrast, the control theory of the Kawahara equation has received relatively limited attention, with only a few studies exploring this area (cf. \cite{Zhang_Zhao_2012,Zhang_Zhao_2015,Roberto_2022,Zhao_Meng_2018}).

In this paper, we address the control problem for the Kawahara equation \eqref{equkaw} on the circle \(\mathbb{T} := \mathbb{R}/2\pi \mathbb{Z}\). Without loss of generality, we assume the parameters \(\alpha = -1\), \(\gamma = \frac{1}{3}\), and \(\beta \in \{-1, 0, 1\}\). By introducing the transformation \(w = u - a\), where  

\[
a = [u_0(x)] = \frac{1}{2\pi} \int_{\mathbb{T}} u_0(x) \, dx,
\]

we observe that if \(u\) solves \eqref{equkaw}, then \([u(t, \cdot)] \equiv a\) for all \(t \geq 0\), and \(w\) satisfies the following equation:  

\begin{align}\label{shift_equn}
	\begin{cases}
		w_t - w_{xxxxx} + \beta w_{xxx} + a w_x + w w_x = 0, & \text{for } t > 0, \, x \in \mathbb{T}, \\
		w(0, x) = w_0(x), &
	\end{cases}
\end{align}

where \([w(t, \cdot)] \equiv 0\) for all \(t \geq 0\). This motivates the study of the equation in the mean-zero Sobolev space, defined as  

\[
H^s_0(\mathbb{T}) := \left\{ u \in H^s(\mathbb{T}) : [u] = \frac{1}{2\pi} \int_{\mathbb{T}} u(x) \, dx = 0 \right\},
\]

where \(H^s(\mathbb{T})\) denotes the Sobolev space of order \(s\) equipped with the standard norm \(\| \cdot \|_s\). For simplicity, the \(L^2(\mathbb{T})\) norm (\(H^0(\mathbb{T})\)) is denoted by \(\| \cdot \|\).

The Cauchy problem for \eqref{shift_equn} has been extensively studied for its well-posedness in \(H^s_0(\mathbb{T})\), following the foundational work on the KdV equation (see \cite{Cui_Tao_2005,Cui_Tao_2006,Chen_Wengu_Zihua_2011} and references therein). The best known result to date \cite{Kato_2012} establishes that the problem is locally well-posed in \(H^s_0(\mathbb{T})\) for any \(s \geq -\frac{3}{2}\) and globally well-posed in \(H^s_0(\mathbb{T})\) for \(s \geq -1\).

To simplify the calculations, we set \(\beta = 1\) and examine the following control problem:

\begin{align}\label{ctrl_equn}
	\begin{cases}
		u_t - u_{xxxxx} + u_{xxx} + u u_x = \eta(t, x), & \text{in } (0, T) \times \mathbb{T}, \\
		u(0, x) = u_0(x), & \text{in } \mathbb{T},
	\end{cases}
\end{align}

where \(T > 0\), \(u_0\) is the initial condition, and \(\eta\) represents the control input. We aim to analyze the approximate controllability of \eqref{ctrl_equn}.

In \cite{Araruna_Capistrano_Doronin_2012}, the authors recently investigated the stabilization problem and proposed a critical set phenomenon for the Kawahara equations, similar to that observed in the KdV equation \cite{Rosier_1997} and the Boussinesq KdV–KdV system \cite{Capistrano_Roberto_Pazoto_2019}, among others. Furthermore, to the best of our knowledge, the control problem \eqref{ctrl_equn} was first addressed in \cite{Zhang_Zhao_2012,Zhang_Zhao_2015}, where the authors studied the Kawahara equation on a periodic domain $\mathbb{T}$ with a distributed control. In \cite{Zhang_Zhao_2012}, the authors successfully established local controllability results for this equation, specifically proving the following theorem:
\begin{customthm}{A}[\cite{Zhang_Zhao_2012}]
	Let \( s \geq 0 \) and \( T > 0 \) be given. There exists a \(\delta > 0\) such that, for any \( u_0, u_1 \in H^s(\mathbb{T}) \) satisfying \( [u_0] = [u_1] \) and 
	\[
	\| u_0 \|_s \leq \delta, \quad \| u_1 \|_s \leq \delta,
	\]
	there exists a control input \(\eta \in L^2(0, T; H^s(\mathbb{T}))\) with support in an open set $\omega\subset\mathbb{T}$, such that the system \eqref{ctrl_equn} has a solution \( u \in C([0, T]; H^s(\mathbb{T})) \) satisfying
	\[
	u(T, \cdot) = u_1(\cdot).
	\]
\end{customthm}
This result was further extended to semi-global controllability for sufficiently large \( T > 0 \) in \cite{Zhang_Zhao_2015}, as stated below:  

\begin{customthm}{B}[\cite{Zhang_Zhao_2015}]  
	Let \( R > 0 \) be given. There exists a \( T > 0 \) such that, for any \( u_0, u_1 \in H^s(\mathbb{T}) \) (\( s \geq 0 \)) satisfying \( [u_0] = [u_1] \) and  
	\[
	\| u_0 \| \leq R, \quad \| u_1 \| \leq R,
	\]  
	there exists a control input \(\eta \in L^2(0, T; H^s(\mathbb{T}))\) with support in an open set $\omega\subset\mathbb{T}$, such that the system \eqref{ctrl_equn} admits a solution \( u \in C([0, T]; H^s(\mathbb{T})) \) satisfying  
	\[
	u(T, \cdot) = u_1(\cdot).
	\]  
\end{customthm}  

Despite these advances, the question of global controllability (without constraints on the initial data, terminal data, and the time of controllability) for \eqref{ctrl_equn} remains unsolved. This study aims to establish global controllability at any time \( T > 0 \) using finite-dimensional control. However, this approach sacrifices exact controllability, achieving only approximate controllability. Moreover, the applied control is localized in Fourier modes, unlike in the previously mentioned theorem, where the control is localized in space.

We begin with initial data in \( H^s_0(\mathbb{T}) \) and a control that comes from a finite-dimensional space defined as  
\begin{align}\label{span_set}
	\mathcal{H}(I) := \text{span}_{\mathbb{R}}\{ \sin (kx), \cos (kx) : k \in I  \},   
\end{align} 
where \( I \subset \mathbb{Z} \) is a finite set. These types of controls are not only of theoretical interest but also find broad applications in physics and engineering.
\begin{definition}[Approximate controllability]\label{approx_ctrl_defn}  
	The equation \eqref{ctrl_equn} is said to be approximately controllable in \( H^s_0(\mathbb{T}) \) by values in \( \mathcal{H}(I) \) if, for any \(T>0,  \varepsilon > 0 \) and any \( u_0, u_1 \in H^s_0(\mathbb{T}) \), there exists a piecewise constant control \( \eta \) with values in \( \mathcal{H}(I) \) and a solution \( u \) of \eqref{ctrl_equn} such that  
	\[
	\| u(T) - u_1 \|_{s} \leq \varepsilon.
	\]  
\end{definition}  
Before presenting our main result, we first introduce the following definitions: Let \( I \subset \mathbb{Z} \). We say that \( I \) is a generator if every element of \( \mathbb{Z} \) can be expressed as a linear combination of elements in \( I \) with integer coefficients. Additionally, \( I \) is called symmetric if \( I = -I \).

Now, we state our main result in the following theorem:
\begin{theorem}\label{main_thm}
	Let \( T > 0 \), \( s \in \mathbb{N} \), and \( I \subset \mathbb{Z}^* \) be a finite symmetric set. The equation \eqref{ctrl_equn} is approximately controllable in \( H^s_0(\mathbb{T}) \) by a piecewise constant control taking values in \( \mathcal{H}(I) \) if and only if \( I \) is a generator of \( \mathbb{Z} \). 
\end{theorem}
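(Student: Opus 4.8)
The plan is to follow the Agrachev--Sarychev geometric control scheme, which reduces global approximate controllability with finite-dimensional forcing to a purely algebraic question about whether iterated Lie-bracket-type operations starting from the control space $\mathcal{H}(I)$ can saturate all Fourier modes. I would first dispose of the easy direction: if $I$ is \emph{not} a generator of $\mathbb{Z}$, then every integer combination of elements of $I$ lies in a proper subgroup $d\mathbb{Z}$ with $d\geq 2$. Since the nonlinearity $uu_x$ and the linear operators $\partial_x^5,\partial_x^3,\partial_x$ all preserve the subspace of functions whose Fourier support lies in $d\mathbb{Z}$, and the control $\eta$ is supported there as well, the whole evolution keeps the solution inside this invariant subspace. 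Hence one cannot approximate a target $u_1$ with a Fourier mode outside $d\mathbb{Z}$, proving necessity.

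For the sufficiency (the substantive direction), the plan is to build an increasing chain of finite-dimensional control subspaces $\mathcal{H}(I)=G_0\subset G_1\subset G_2\subset\cdots$, the \emph{saturating sequence}, where each $G_{n+1}$ is obtained from $G_n$ by adjoining the new directions produced by the quadratic nonlinearity. Concretely, for $\xi,\eta$ in the previous space one extracts from the bilinear interaction $B(\xi,\eta)=\tfrac12\partial_x(\xi\eta)$ (the symmetrized form of $uu_x$) the vector field $B(\xi,\eta)$, after subtracting off the drift produced by the linear part; the key computation is that $\partial_x(\sin kx\,\sin\ell x)$, $\partial_x(\sin kx\,\cos \ell x)$, etc., generate the modes $k\pm\ell$. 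I would then prove by induction that $\bigcup_n G_n$ is dense in $H^s_0(\mathbb{T})$, equivalently that $\bigcup_n I_n=\mathbb{Z}^*$, where $I_0=I$ and $I_{n+1}=I_n\cup\{k\pm\ell : k,\ell\in I_n\}$: since $I$ is symmetric and a generator, every integer is an integer combination of elements of $I$, and the closure of $I$ under the operations $k,\ell\mapsto k\pm\ell$ recovers the entire subgroup generated by $I$, namely all of $\mathbb{Z}^*$. This is the \emph{saturation property}.

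The analytic heart, which I expect to be the main obstacle, is the convexification/extension argument that converts the algebraic saturation into genuine controllability of \eqref{ctrl_equn}. Here one must show that allowing the large, fast (or impulsive) controls needed to realize the bracket directions does not destroy the well-posedness estimates: one approximates the idealized dynamics that includes the formal forcing terms $B(\xi,\eta)$ by solutions of the actual equation driven by rapidly oscillating piecewise-constant controls in $\mathcal{H}(I_n)$, and controls the error in the $H^s$ norm uniformly. This is where the Bourgain-space machinery enters, providing the bilinear and continuity estimates for the Kawahara flow that make the relaxation/limiting procedure rigorous; one proves that the attainable set from $\mathcal{H}(I_n)$-controls is dense in the attainable set from $\mathcal{H}(I_{n+1})$-controls, and iterates. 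A technical subtlety is that the feedback between the large controls and the nonlinearity must be estimated carefully so that the approximation error remains $O(\varepsilon)$ while the extra modes are switched on; getting a clean \emph{a priori} bound on the solution driven by the oscillating control, valid on the fixed time interval $[0,T]$, is the delicate point.

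Finally, I would assemble these pieces: given $u_0,u_1\in H^s_0(\mathbb{T})$ and $\varepsilon>0$, choose $n$ so large that $G_n$ contains an $\varepsilon/2$-approximation of the relevant data (possible by density from saturation), realize the target up to $\varepsilon/2$ using an explicitly constructed control in $\mathcal{H}(I_n)$, and then descend back down the chain $G_n\to G_{n-1}\to\cdots\to G_0=\mathcal{H}(I)$ using the relaxation estimate at each level to replace the $\mathcal{H}(I_n)$-control by an admissible piecewise-constant $\mathcal{H}(I)$-valued control while accumulating total error below $\varepsilon$. This yields approximate controllability in $H^s_0(\mathbb{T})$ at any time $T>0$, completing the proof.
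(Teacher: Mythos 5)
Your overall strategy coincides with the paper's: the necessity argument via the invariant subspace of Fourier modes supported in $d\mathbb{Z}$ is exactly the one used there, and the sufficiency follows the same Agrachev--Sarychev saturation scheme, with the same algebraic saturation lemma ($I_{n+1}$ built from sums of pairs in $I_n$, the polarization identities producing the modes $k\pm\ell$; cf.\ Lemma \ref{lema_regarding_I_n}). The analytic core that you flag as ``the delicate point'' is realized in the paper by the asymptotic property (Proposition \ref{prp_limit}): after the time rescaling $t\mapsto\delta t$, a large shift $\delta^{-1/2}\zeta$ and forcing $\delta^{-1}\eta$ yield $\mathcal{R}_\delta(u_0,\delta^{-1/2}\zeta,\delta^{-1}\eta)\to u_0+\eta-\zeta\zeta_x$ in $H^s(\mathbb{T})$, and a double induction $\mathbb{P}_{n,k}$ (over the saturation level $n$ and the number $k$ of quadratic terms), combined with the stability estimate (Proposition \ref{prp_continty}) and the flow-concatenation property (Lemma \ref{alg_r(t)}), converts this into small-time reachability of $u_0+w$ for every $w\in\mathcal{C}(I_n)$ using only $\mathcal{H}(I_0)$-valued controls. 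Note that the mechanism is a large-amplitude, short-time limit of essentially constant-in-time controls, not rapid oscillation or averaging; your oscillatory picture would require precisely the uniform $H^s$ bounds that you identify as the obstacle but do not supply, whereas the paper obtains them through energy estimates for the perturbed system together with the Bourgain-space bilinear estimates.

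There is, however, one genuine gap: your final assembly produces the target at some small (and a priori uncontrolled) time, whereas the theorem requires $\|u(T)-u_1\|_s\le\varepsilon$ at the \emph{prescribed} time $T$, for every $T>0$. Nothing in your last paragraph bridges the remaining time interval: once the trajectory is near $u_1$, it will in general drift away under the free Kawahara flow, so one cannot simply switch the control off and wait until $T$. The paper closes this with a loitering argument: by continuity in time and stability with respect to the initial data (Proposition \ref{prp_continty}) there exist $r\in(0,\varepsilon)$ and $\tau>0$ such that every free trajectory issued from $B_{H^s_0(\mathbb{T})}(u_1,r)$ remains $\varepsilon$-close to $u_1$ on $[0,\tau]$; one then alternates small-time steering back into $B_{H^s_0(\mathbb{T})}(u_1,r)$ (Proposition \ref{prp_small_time_ctrl}) with free evolution of duration at most $\tau$, concatenating the pieces via Lemma \ref{alg_r(t)}, so that after finitely many rounds the state at time exactly $T$ is $\varepsilon$-close to $u_1$. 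Without this device (or an equivalent one), your argument establishes small-time approximate controllability but not the statement as formulated.
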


\begin{remark} 
The smallest symmetric generating subset of \( \mathbb{Z}^* \) is \( \{\pm 1\} \). Consequently, global approximate controllability at any time can be achieved using control values in a two-dimensional space. The control is expressed as  
	\[  
	\eta(t, x) = \alpha_1(t) \sin(x) + \alpha_2(t) \cos(x),  
	\]  
	where the functions \( \alpha_i \in L^2(\mathbb{R}^+, \mathbb{R}) \) for \( i = 1, 2 \) act as controls.  
\end{remark}
To achieve the desired result, we adopt the Agrachev-Sarychev approach, initially introduced by Agrachev and Sarychev \cite{Agrachev_Sarychev_2005,Agrachev_Sarychev_2006} to study the approximate controllability of the Navier-Stokes and 2D Euler systems under finite-dimensional forcing. 
This approach has been successfully extended to various equations. It has been applied to the 3D Navier-Stokes equations \cite{Shirikyan_06,Shirikyan_07}, as well as to both the compressible and incompressible Euler equations \cite{Nersisyan_2010,Nersisyan_2011}. Its application has also been explored for the viscous Burgers equation \cite{Shirikyan_14,Shirikyan_18} and the Schrödinger equation \cite{Sarychev_12}. Additionally, it has been employed to demonstrate small-time approximate controllability for the quantum density and momentum of the 1D semiclassical cubic Schrödinger equation \cite{Coron_Xiang_Zhang_2023}. More recent applications on nonlinear parabolic PDEs \cite{Narsesyan_21}, which builds on the contributions of \cite{Glatt-Holtz_Herog_Mattingly_2018}. This technique has also been utilized to establish global approximate controllability using finite-dimensional control for various equations, including the Camassa-Holm Equation \cite{Shirshendu_Debanjit_2024}, the Benjamin-Bona-Mahony equation \cite{Jellouli_23}, and the Kuramoto-Sivashinsky equation \cite{Peng_Gao_2022}.
To the best of our knowledge, the Agrachev-Sarychev approach has not been previously applied to the Kawahara equation. Unlike parabolic equations, the Kawahara equation presents challenges in establishing smoothing properties in classical Sobolev spaces \( H^s(\mathbb{T}) \). Consequently, many of the crucial estimates required for the Agrachev-Sarychev approach cannot be obtained using techniques developed for parabolic equations. To address this difficulty, we introduce a functional space constructed by Bourgain \cite{Bourgain_93}, now known as the Bourgain space associated with the Kawahara equation. By leveraging the favorable properties of the Bourgain space and the conserved integral quantities of the Kawahara equation, we are able to adapt the Agrachev-Sarychev approach to this setting.

The proof of Theorem \ref{main_thm} is divided into two parts: the sufficient part and the necessary part. The sufficient part begins by considering a symmetric subset $I$ of $\mathbb{Z^*}$ which generates $\mathbb{Z}$. We then define a sequence of subsets $\{I_n\}$ such that $I_n \subsetneq I_{n + 1},$ and the union $\cup_{n \in \mathbb{N}} I_n = \mathbb{Z}.$ Consequently, the union of the corresponding spaces $\mathcal{H}(I_n)$ contains all Fourier modes. This implies that $\cup_{n \in \mathbb{N}} \mathcal{H}(I_n)$ is dense in $H^s_0(\mathbb{T}).$ To establish approximate controllability in $H^s_0(\mathbb{T}),$ it suffices to consider a target state in an affine finite-dimensional space shifted by the initial data $u_0,$ namely $u_0 + \mathcal{H}(I_n)$ for some $n \in \mathbb{N}.$ A crucial observation is that the nonlinear term $u \partial_{x}u$ behaves linearly with respect to the frequency of Fourier modes. Exploiting this fact, along with certain asymptotic properties, we can apply a large $\mathcal{H}(I_0)$-valued control over a short time interval to steer the trajectory of \eqref{ctrl_equn} arbitrarily close to any target in $u_0 + \mathcal{H}(I_0).$ By iterating this process, we can extend the reach to any target in $u_0 + \mathcal{H}(I_n)$ for larger $n,$ because the structure of the nonlinearity ensures that each step allows us to access a larger affine space. Ultimately, this process establishes approximate controllability in small time. By applying the continuity and stability of the solution, we ensure that the trajectory stays close to the target over a longer time period, leading to approximate controllability within the specified time. For the necessary part, we assume that $I \subset \mathbb{Z^*}$  is a symmetric subset that does not generate $\mathbb{Z}.$ In this case, the trajectory of \eqref{ctrl_equn} under $\mathcal{H}(I)$-valued control cannot reach a dense subset of $H^s_0(\mathbb{T}).$ Hence, for approximate controllability, $I$ must generate $\mathbb{Z}.$

The article is organized as follows: Section \ref{sec_exit} presents the well-posedness result for the nonlinear problem, along with propositions regarding the continuity of the solution with respect to the initial data and the asymptotic behavior, which are crucial for proving the main result. In Section \ref{sec_algebra}, we establish certain algebraic properties of \( I_n \), \( \mathcal{H}(I_n) \), and the solution map. Section \ref{sec_mainthm} is devoted to proving the main result, Theorem \ref{main_thm}. Finally, in Section \ref{sec_prop}, we review properties of Bourgain spaces associated with the Kawahara equation and use these to establish the results stated in Section \ref{sec_exit}.


\section{Well-posedness and Asymptotic Property}\label{sec_exit} 

In this section, we discussed results about the existence of solutions and the stability of the equation. These results are essential for establishing our main result, Theorem \ref{main_thm}. Detailed proofs of these results will be provided in Section \ref{sec_prop}. For now, let's look at the following generalization of the system \eqref{ctrl_equn}:
\begin{align}\label{xtd_equn}
	\begin{cases}
		u_t - (u + \zeta)_{xxxxx} + (u + \zeta)_{xxx} + (u + \zeta)(u + \zeta)_x = \varphi, &\text{ in } (0,T) \times \mathbb{T}, \\
		u(0, \cdot) = u_0(\cdot), &\text{ in } \mathbb{T},
	\end{cases}
\end{align}
where $\zeta = \zeta(x)$ and $\varphi = \varphi(t, x)$ are given functions. 


\begin{proposition}[Well-posedness]\label{prp_existance}
	Let \(s \in \mathbb{N}\) and \(T > 0\). For \(u_0 \in H^s(\mathbb{T}),\ \zeta \in H_0^{s + 7}(\mathbb{T}),\) and \(\varphi \in L^2(0, T; H_0^s(\mathbb{T})),\) system \eqref{xtd_equn} admits a unique solution in \(C([0, T]; H^s(\mathbb{T})).\)
\end{proposition}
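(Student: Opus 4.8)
The plan is to rewrite \eqref{xtd_equn} as a forced, lower-order perturbation of the free Kawahara flow, solve it locally by a contraction in the Bourgain space tuned to the fifth-order dispersion, and then globalize using the conserved \(L^2\)-quantity. Since \(\zeta\) is independent of \(t\), expanding the dispersive and nonlinear terms gives
\[
(u+\zeta)_{xxxxx}=u_{xxxxx}+\zeta_{xxxxx},\qquad (u+\zeta)(u+\zeta)_x=uu_x+(\zeta u)_x+\zeta\zeta_x ,
\]
so that \eqref{xtd_equn} is equivalent to
\[
u_t-u_{xxxxx}+u_{xxx}+uu_x+(\zeta u)_x=\tilde\varphi,\qquad \tilde\varphi:=\varphi+\zeta_{xxxxx}-\zeta_{xxx}-\zeta\zeta_x .
\]
Every term added to \(\varphi\) is a spatial derivative of an \(H^s\) function (this is where \(\zeta\in H_0^{s+7}\) is used), so \(\tilde\varphi\in L^2(0,T;H_0^s)\) is again mean-zero; integrating over \(\mathbb T\) then shows \([u(t)]\equiv[u_0]=:a\). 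Writing \(u=a+v\) with \([v(t)]=0\) turns \(uu_x+(\zeta u)_x\) into \(vv_x+(\zeta v)_x+av_x+a\zeta_x\), so it suffices to solve the mean-zero problem for \(v\) whose only nonlinearity is \(vv_x\), whose lower-order linear part is \((\zeta v)_x+av_x\), and whose forcing \(\tilde\varphi-a\zeta_x\) lies in \(L^2(0,T;H_0^s)\).

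Next I would set up the Duhamel formulation with the group \(W(t)=\exp\!\big(t(\partial_x^5-\partial_x^3)\big)\), whose space--time symbol concentrates on \(\tau=\phi(k)\), \(\phi(k)=k^5+k^3\), and work in the Bourgain space \(X^{s,b}\) with norm \(\|v\|_{X^{s,b}}^2=\sum_k\int_{\mathbb R}\langle k\rangle^{2s}\langle\tau-\phi(k)\rangle^{2b}|\tilde v(\tau,k)|^2\,d\tau\) and \(b\) slightly above \(\tfrac12\), together with its time-restricted version \(X^{s,b}_\delta\). The three estimates I would borrow from the Bourgain theory developed in Section \ref{sec_prop} are the embedding \(X^{s,b}_\delta\hookrightarrow C([0,\delta];H^s)\), the linear and Duhamel bounds \(\|W(t)v_0\|_{X^{s,b}_\delta}\lesssim\|v_0\|_s\) and \(\big\|\int_0^t W(t-t')F\,dt'\big\|_{X^{s,b}_\delta}\lesssim\delta^{\theta}\|F\|_{X^{s,b-1}_\delta}\) for some \(\theta>0\), and the bilinear estimate \(\|\partial_x(vw)\|_{X^{s,b-1}_\delta}\lesssim\|v\|_{X^{s,b}_\delta}\|w\|_{X^{s,b}_\delta}\), in which the strong fifth-order resonance (and \(s\ge0\)) pays for the derivative. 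The first-order linear terms need separate care, since they cannot be placed into the Duhamel forcing without losing a derivative: the constant-coefficient drift \(av_x\) is absorbed into the propagator by shifting the phase to \(\phi(k)-ak\), while the smooth-coefficient term \((\zeta v)_x\) is incorporated into the linear evolution as a genuine lower-order perturbation of the order-five operator, which is where the extra regularity \(\zeta\in H^{s+7}\) is spent.

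Local existence and uniqueness then follow from a standard fixed-point argument applied to
\[
\Phi(v):=W(t)v_0+\int_0^t W(t-t')\big(\tilde\varphi-a\zeta_x-vv_x-(\zeta v)_x-av_x\big)\,dt' ,
\]
viewed on a ball of \(X^{s,b}_\delta\). The Duhamel factor \(\delta^{\theta}\), combined with the bilinear estimate and the treatment of the linear terms, makes \(\Phi\) a self-map and a contraction once \(\delta\) is small depending only on \(\|v_0\|_s\), \(\|\tilde\varphi\|_{L^2_tH^s}\), \(\|\zeta\|_{s+7}\) and \(|a|\); the unique fixed point sits in \(C([0,\delta];H^s)\) by the embedding, and the difference estimate gives uniqueness.

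Finally, to reach an arbitrary \(T\) I would iterate the local step, which requires an a priori bound keeping \(\|v(t)\|_s\) finite on \([0,T]\). Pairing the equation with \(v\) kills the dispersive, drift \((av_x)\), and nonlinear \((vv_x)\) contributions by antisymmetry, leaves \(\langle v,(\zeta v)_x\rangle=\tfrac12\int\zeta_x v^2\) together with the forcing term, and a Gronwall argument yields a global \(L^2\) bound in terms of \(\|\tilde\varphi\|_{L^2_tH^s}\), \(\|\zeta\|_{s+7}\) and \(T\). The \(H^s\) norm is then propagated through each local window by the high-low form of the bilinear estimate, in which one factor is measured in \(L^2\), so that the growth of \(\|v\|_s\) is controlled linearly by the already-bounded \(L^2\) norm; this prevents the Riccati-type blow-up that a naive \(H^s\) energy estimate would produce for this non-integrable equation and lets finitely many local steps cover \([0,T]\). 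I expect the main obstacle to be precisely this interface between the dispersive and the energy estimates: establishing the bilinear estimate (and its high-low refinement) for the fifth-order Kawahara propagator on \(\mathbb T\), and checking that the extended first-order terms \((\zeta v)_x\) and \(av_x\) are absorbed without derivative loss, so that the global \(L^2\) control upgrades to a finite \(H^s\) bound on all of \([0,T]\).
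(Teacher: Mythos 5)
Your local theory follows the paper's skeleton exactly: subtract the mean \(a=[u_0]\), absorb the drift \(a\partial_x\) into the propagator (the paper's group \(\mathcal{W}(t)\) has phase \(k^5+k^3-ak\), matching your shift of \(\phi(k)\)), and run a contraction in a time-restricted Bourgain space. But your treatment of the variable-coefficient term \((\zeta v)_x\) contains a genuine misstep. You assert that the first-order linear terms ``cannot be placed into the Duhamel forcing without losing a derivative'' and propose instead to build \((\zeta v)_x\) into the linear evolution as a lower-order perturbation of the fifth-order operator. The first claim is false, and the proposed alternative is a gap: the \(X_{b,s}\) weight \(\langle\tau-k^5-k^3+ak\rangle^{b}\) is tied to a constant-coefficient, Fourier-diagonal generator, and there is no off-the-shelf Bourgain theory for \(\partial_x^5-\partial_x^3+\zeta\partial_x+\zeta_x\) with \(x\)-dependent \(\zeta\); making that rigorous would be a substantial project in itself. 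The paper does the simple thing: since \(\zeta\) is static and smooth, property \eqref{H_s+5_in_Bourgain} of Lemma \ref{Bourgain_space} gives \(\|\zeta\|_{X^T_{\frac12,s}}\leq C\|\zeta\|_{s+5}\), and then the bilinear estimate \eqref{bilinear_est} --- which recovers exactly the derivative you were worried about, with a factor \(\theta^\alpha\) to boot --- bounds \(\|(v\zeta)_x\|_{Z^\theta_{-\frac12,s}}\leq C\theta^\alpha\|v\|_{Z^\theta_{\frac12,s}}\|\zeta\|_{s+5}\). So \((\zeta v)_x\) sits in the Duhamel integral alongside \(vv_x\) with no extra machinery. (A related caution: on \(\mathbb{T}\) these bilinear estimates are known at \(b=\frac12\) with the auxiliary \(Y\)-norm, which is why the paper works in \(Z_{\frac12,s}=X_{\frac12,s}\cap Y_{0,s}\); your assumption that \(b\) ``slightly above \(\tfrac12\)'' is available is not something you can take for granted in the periodic setting.)

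Your globalization route is also genuinely different from the paper's, and its key ingredient is asserted rather than proved: the high-low (tame) refinement of the bilinear estimate, which would make the local existence time depend only on the \(L^2\) norm, is exactly the point you flag as the ``main obstacle,'' and nothing in your argument supplies it. The paper avoids this entirely by proving the a priori bound \eqref{a_priori_est} through energy arguments exploiting the Kawahara conservation-law structure: the \(L^2\) estimate at \(s=0\); at \(s=2\) the modified energy \(-\tfrac13\int_{\mathbb{T}}u^3\,dx+\|u_x\|^2+\|u_{xx}\|^2\), obtained by pairing the equation with the multiplier \(u^2+2u_{xx}-2u_{xxxx}\); \(s=1\) by nonlinear interpolation between \(s=0\) and \(s=2\); and \(s\geq3\) by induction with the multiplier \(\partial_x^{2s}u\). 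Note that in this induction the trilinear terms organize so that \(\|\partial_x^s u\|^2\) is multiplied only by \(\|u\|_2\) or by the inductively bounded \(\|u\|_{s-1}\), so the ``Riccati-type blow-up'' you fear from a direct \(H^s\) energy estimate simply does not occur --- the cascade structure of \(\langle\partial_x^{s+1}(u^2),\partial_x^s u\rangle\) is tame after integration by parts. If you want to keep your Bourgain-only globalization, you must actually prove the tame bilinear estimate for the periodic Kawahara propagator; otherwise, the paper's energy route is both more elementary and already sufficient, and it is needed anyway for the stability result (Proposition \ref{prp_continty}). Finally, uniqueness in all of \(C([0,T];H^s(\mathbb{T}))\) does not follow from the contraction alone (which gives uniqueness only in a ball of the restricted Bourgain space); the paper closes this with an \(L^2\) energy estimate on the difference of two solutions plus Gr\"onwall, a step your proposal omits.
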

Define $\mathcal{R}$ as the mapping that takes the triple $(u_0, \zeta, \varphi)$ to the solution of \eqref{xtd_equn}. The restriction of \(\mathcal{R}(u_0, \zeta, \varphi)\) at time \(t\) is denoted by \(\mathcal{R}_t(u_0, \zeta, \varphi)\).
\begin{remark}\label{rmk_lip}
	From the uniqueness of the solution to \eqref{xtd_equn}, we obtain the following equality for all \(t \in [0, T]\):
	\begin{equation}\label{uniqueness_property}
		\mathcal{R}_t(u_0, \zeta, \varphi) = \mathcal{R}_t(u_0 + \zeta, 0, \varphi) - \zeta.
	\end{equation}    
\end{remark}
\begin{proposition}[Stability]\label{prp_continty}
	Let \(s \in \mathbb{N},\ T > 0,\) and \(\varphi \in L^2(0, T; H^s_0(\mathbb{T}))\). For any \(u_0, v_0 \in H^s(\mathbb{T})\) such that $\left[ u_0\right]=\left[ v_0\right], $ there exists a constant \(C > 0\) such that
	\begin{align}\label{lips_property}
		\sup_{t \in [0, T]} \|\mathcal{R}_t(u_0, 0, \varphi) - \mathcal{R}_t(v_0, 0, \varphi)\|_s \leq C \|u_0 - v_0\|_s.
	\end{align}
\end{proposition}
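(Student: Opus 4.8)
The plan is to reduce the statement to a stability estimate for the difference of the two solutions and then propagate it across $[0,T]$ by iterating short-time estimates. Set $u=\mathcal{R}_t(u_0,0,\varphi)$ and $v=\mathcal{R}_t(v_0,0,\varphi)$, both of which exist in $C([0,T];H^s(\mathbb{T}))$ by Proposition \ref{prp_existance}, and let $w:=u-v$. Subtracting the two copies of \eqref{ctrl_equn} (with $\eta=\varphi$) and using the identity $uu_x-vv_x=\tfrac12\partial_x\big((u+v)w\big)$, the difference solves the problem linear in $w$,
\[
w_t - w_{xxxxx} + w_{xxx} + \tfrac12\partial_x\big((u+v)w\big)=0,\qquad w(0,\cdot)=u_0-v_0 .
\]
Because $\varphi$ is mean-zero and every remaining term is a spatial derivative, the mean is conserved along each trajectory; hence the hypothesis $[u_0]=[v_0]$ gives $[w(t,\cdot)]\equiv 0$. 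This places $w$ in the mean-zero class, which is precisely what is needed to run the bilinear estimates in the Bourgain space associated with the Kawahara equation.

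First I would localize in time. Splitting $u+v=2[u_0]+\big((u+v)-2[u_0]\big)$ isolates the constant-coefficient transport term $[u_0]\,\partial_x w$, which I fold into the free Kawahara propagator $W(t)$; since this only shifts the phase $k^5+k^3$ by the linear term $-[u_0]k$, the resonance function is unchanged and the bilinear estimates are unaffected. Duhamel's formula then reads
\[
w(t)=W(t)\,w(0)-\tfrac12\int_0^t W(t-\tau)\,\partial_x\big(g\,w\big)(\tau)\,d\tau,\qquad g:=(u+v)-2[u_0].
\]
On a short interval $[0,\delta]$ I would estimate $w$ in the time-restricted Bourgain norm $X^{s,b}_\delta$ with $b$ slightly above $1/2$: the homogeneous estimate bounds $\|W(t)w(0)\|_{X^{s,b}_\delta}$ by $\|w(0)\|_s$, while the inhomogeneous estimate together with the bilinear estimate $\|\partial_x(fh)\|_{X^{s,b-1}}\lesssim\|f\|_{X^{s,b}}\|h\|_{X^{s,b}}$ and a positive power $\delta^\theta$ gained from the short interval yield
\[
\|w\|_{X^{s,b}_\delta}\le C\|w(0)\|_s+C\,\delta^{\theta}\,\|g\|_{X^{s,b}_\delta}\,\|w\|_{X^{s,b}_\delta}.
\]
The well-posedness construction bounds $u$ and $v$, hence $g$, in $X^{s,b}_\delta$ uniformly in the location of the subinterval in terms of $M:=\sup_{[0,T]}(\|u\|_s+\|v\|_s)$; choosing $\delta=\delta(M)$ so that $C\delta^\theta\|g\|_{X^{s,b}_\delta}\le\tfrac12$ absorbs the last term and gives $\|w\|_{X^{s,b}_\delta}\le 2C\|w(0)\|_s$. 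The embedding $X^{s,b}_\delta\hookrightarrow C([0,\delta];H^s)$ converts this into $\sup_{[0,\delta]}\|w(t)\|_s\le 2C\|w(0)\|_s$.

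To globalize, I would note that $\delta$ depends only on $M$, which is finite by Proposition \ref{prp_existance}. Partitioning $[0,T]$ into $N=\lceil T/\delta\rceil$ intervals of length $\delta$ and applying the local estimate on each, taking the value $w(t_n)$ as the initial datum for the next step (the mean-zero property persists throughout), one obtains $\sup_{[0,T]}\|w(t)\|_s\le (2C)^N\|u_0-v_0\|_s$. This is exactly \eqref{lips_property}, with Lipschitz constant $(2C)^N$ depending on $s$, $T$, $\varphi$ and the a priori bound $M$, i.e.\ on $\|u_0\|_s$ and $\|v_0\|_s$.

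I expect the delicate point to be the local step rather than the iteration. A direct $H^s$ energy argument breaks down at small $s$ because closing it requires controlling $\|(u+v)_x\|_{L^\infty}$, which is not available from $\|u+v\|_s$ alone; the dispersive smoothing encoded in the Bourgain norm is what substitutes for this missing control. Consequently the main obstacle is establishing the bilinear estimate adapted to the symbol $k^5+k^3$ and verifying that it remains valid against the variable, non-mean-zero coefficient $u+v$ once the transport part has been separated, together with the bookkeeping that makes the gain $\delta^\theta$, and hence the absorbing threshold $\delta(M)$, uniform across all subintervals. These are precisely the Bourgain-space facts deferred to Section \ref{sec_prop}.
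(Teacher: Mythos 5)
Your proposal is correct and follows essentially the same route as the paper: subtract the two equations, use mean conservation to keep $w$ mean-zero, fold the constant transport $a\partial_x$ (with $a=[u_0]$, which only shifts the phase to $k^5+k^3-ak$) into the propagator $\mathcal{W}(t)$, close an absorption estimate for $w$ in time-restricted Bourgain norms on subintervals whose uniform length is dictated by the a priori $H^s$ bounds on $u$ and $v$, and iterate the short-time bound across $\lceil T/\delta\rceil$ intervals to obtain the Lipschitz constant. The only cosmetic deviation is your choice of $b$ slightly above $\tfrac12$; the paper instead works exactly at $b=\tfrac12$ with the auxiliary space $Y_{b-\frac12,s}$, i.e.\ in $Z_{\frac12,s}=X_{\frac12,s}\cap Y_{0,s}$, which is what Lemma \ref{Bourgain_space} actually provides for the embedding into $C(\overline{I};H^s(\mathbb{T}))$ and the bilinear estimate with the gain $\theta^\alpha$ in the periodic setting.
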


 To investigate approximate controllability, we study the following perturbed system. For \(\delta > 0\) and two smooth functions \(\zeta(x)\) and \(\eta(x)\), we consider the equation:
\begin{align}\label{xtd_dlta_equn}
	\begin{cases}
		u_t - (u + \delta^{-\frac{1}{2}} \zeta)_{xxxxx} + (u + \delta^{-\frac{1}{2}} \zeta)_{xxx} + (u + \delta^{-\frac{1}{2}} \zeta)(u + \delta^{-\frac{1}{2}} \zeta)_x = \delta^{-1}\eta, &\text{ for } t > 0,\ x \in \mathbb{T}, \\
		u(0, x) = u_0(x).
	\end{cases}
\end{align}

\begin{proposition}[Asymptotic property]\label{prp_limit}
	Let \(s \in \mathbb{N}\). For any \(u_0 \in H^{s + 7}(\mathbb{T}),\ \zeta \in H_0^{s + 8}(\mathbb{T}),\) and \(\eta \in H_0^{s + 7}(\mathbb{T}),\) there exists \(\delta_0 \in (0, 1)\) such that for any \(\delta \in (0, \delta_0),\) the solution of \eqref{xtd_dlta_equn} satisfies the following limit:
	\[
	\mathcal{R}_\delta(u_0, \delta^{-\frac{1}{2}} \zeta, \delta^{-1} \eta) \to u_0 + \eta - \zeta \zeta_x, \quad \text{in } H^s(\mathbb{T}), \text{ as } \delta \to 0^+.
	\]
\end{proposition}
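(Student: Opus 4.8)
The plan is to rescale time so that the short interval $[0,\delta]$ becomes the fixed interval $[0,1]$, turning the singular $\delta^{-1}$ forcing into an $O(1)$ source. Denoting by $u$ the solution of \eqref{xtd_dlta_equn} and expanding the nonlinearity, the equation reorganizes, after collecting powers of $\delta$, into
\[
u_t = \big(u_{xxxxx}-u_{xxx}-uu_x\big) + \delta^{-\frac12}\big(\zeta_{xxxxx}-\zeta_{xxx}-u\zeta_x-\zeta u_x\big) + \delta^{-1}\big(\eta-\zeta\zeta_x\big).
\]
Setting $\tau=t/\delta$, $v(\tau,\cdot)=u(\delta\tau,\cdot)$ and $g:=\eta-\zeta\zeta_x$, this is equivalent to
\[
v_\tau = g + \delta^{\frac12}\big(\zeta_{xxxxx}-\zeta_{xxx}-v\zeta_x-\zeta v_x\big) + \delta\big(v_{xxxxx}-v_{xxx}-vv_x\big),\qquad v(0)=u_0,
\]
on $\tau\in[0,1]$, with $v(1)=\mathcal{R}_\delta(u_0,\delta^{-\frac12}\zeta,\delta^{-1}\eta)$. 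Formally letting $\delta\to0^+$ kills the perturbation and leaves $v_\tau=g$, whose solution $u_0+\tau g$ equals the claimed limit at $\tau=1$. It therefore suffices to prove that $w:=v-(u_0+\tau g)$ tends to $0$ in $C([0,1];H^s)$; note $w(0)=0$ and
\[
w_\tau = \delta^{\frac12}\big(\zeta_{xxxxx}-\zeta_{xxx}-v\zeta_x-\zeta v_x\big) + \delta\big(v_{xxxxx}-v_{xxx}-vv_x\big).
\]

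Before estimating $w$, I would establish a bound on $v$ that is uniform in $\delta\in(0,\delta_0)$ in a Sobolev norm one order above $s$. By Proposition \ref{prp_existance} (applied with perturbation $\delta^{-\frac12}\zeta\in H^{s+8}_0$ and forcing $\delta^{-1}\eta\in H^{s+7}_0$) the rescaled solution $v$ lives in $C([0,1];H^{s+1})$, so the energy estimates below are legitimate. Performing an $H^{s+1}$ energy estimate on the $v$-equation, the crucial point is that the operators $\partial_x^5,\partial_x^3$ are skew-adjoint and commute with $\Lambda^{s+1}$, so $\delta\langle v_{xxxxx},v\rangle_{s+1}=\delta\langle v_{xxx},v\rangle_{s+1}=0$; the remaining quadratic and forcing contributions are controlled by the Sobolev algebra and the regularity of $\zeta,\eta$. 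This yields $\tfrac{d}{d\tau}\|v\|_{s+1}\le C\delta\|v\|_{s+1}^2 + C$, and a standard continuation argument on the bounded interval $[0,1]$ gives $\sup_{\tau\in[0,1]}\|v(\tau)\|_{s+1}\le C$ for all small $\delta$.

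With this bound in hand, I would run an $H^s$ energy estimate on the $w$-equation. The delicate terms are those carrying the dispersive operators with the factor $\delta$; the key trick is to write $v=w+(u_0+\tau g)$, so that
\[
\delta\langle v_{xxxxx},w\rangle_s = \delta\langle w_{xxxxx},w\rangle_s + \delta\langle \partial_x^5(u_0+\tau g),w\rangle_s = \delta\langle \partial_x^5(u_0+\tau g),w\rangle_s,
\]
the first term vanishing by skew-adjointness; the surviving term is bounded by $\delta\|u_0+\tau g\|_{s+5}\|w\|_s$, which is finite precisely because $u_0\in H^{s+7}$ and $g=\eta-\zeta\zeta_x\in H^{s+7}$. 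The third-order term is treated identically, while the quadratic terms $vv_x,v\zeta_x,\zeta v_x$ and the source $\zeta_{xxxxx}-\zeta_{xxx}$ are bounded using the uniform bound on $\|v\|_{s+1}$ and the regularity of $\zeta$. Collecting these estimates gives
\[
\frac{d}{d\tau}\|w\|_s \le C\big(\delta^{\frac12}+\delta\big) + C\delta\|w\|_s,\qquad \|w(0)\|_s=0,
\]
and Grönwall's inequality on $[0,1]$ yields $\sup_{\tau\in[0,1]}\|w(\tau)\|_s\le C(\delta^{\frac12}+\delta)\to0$. Evaluating at $\tau=1$ gives $\mathcal{R}_\delta(u_0,\delta^{-\frac12}\zeta,\delta^{-1}\eta)=v(1)\to u_0+g=u_0+\eta-\zeta\zeta_x$ in $H^s$, which is the assertion.

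I expect the main obstacle to be the uniform-in-$\delta$ a priori bound on $v$: naively the fifth-order term $\delta v_{xxxxx}$ would cost five derivatives and cannot be absorbed, so the argument hinges entirely on exploiting the skew-adjointness of $\partial_x^5$ and $\partial_x^3$ in the $L^2$-based energy estimates, together with splitting off the smooth profile $u_0+\tau g$ so that the only genuinely fifth-order contribution falls on the known, regular data. Keeping careful track of how many derivatives each term consumes is what dictates the hypotheses $\zeta\in H^{s+8}$ and $u_0,\eta\in H^{s+7}$, and rigorously justifying the energy identities (rather than merely the inequalities) may require a preliminary smoothing of the data followed by a limiting argument.
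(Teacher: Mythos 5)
Your overall scheme is, at its core, the paper's own: after rescaling time, the paper sets $z(t):=u(\delta t)-w(t)$ with $w(t)=u_0+t(\eta-\zeta\zeta_x)$, which is exactly your $w=v-(u_0+\tau g)$, and it too exploits skew-adjointness of $\partial_x^5,\partial_x^3$ together with the smoothness of the profile to absorb the singular factors. The difference is architectural: you interpose an extra step, a uniform-in-$\delta$ bound on the \emph{full} solution $v$ in $H^{s+1}$, and this step contains a genuine gap at $s=0$. Your claimed inequality $\frac{d}{d\tau}\|v\|_{s+1}\le C\delta\|v\|_{s+1}^2+C$ is not justified when $s+1=1$: the Burgers self-interaction gives
\begin{equation*}
\delta\,\bigl\langle \partial_x(vv_x),\,\partial_x v\bigr\rangle \;=\; \tfrac{\delta}{2}\int_{\mathbb{T}} v_x^3\,dx,
\end{equation*}
which requires control of $\|v_x\|_{L^\infty}$, and this is \emph{not} dominated by $\|v\|_{H^1}$ (this is the usual $H^m$, $m>3/2$, threshold for Burgers-type energy estimates; the factor $\delta$ does not help close the inequality). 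Nor can you escape by running the estimate in $H^2$: with $\zeta\in H^{s+8}_0$ only, Proposition \ref{prp_existance} (which demands $\zeta\in H^{\sigma+7}_0$ to work at level $\sigma$) furnishes the solution merely in $C([0,1];H^{s+1})$, so for $s=0$ there is no $H^2$ solution theory available within the stated hypotheses, and the paper does include $s=0$ (it is treated as a base case throughout). For $s\ge 1$ your Step 1 closes ($s+1\ge 2$, so $\|v_x\|_\infty\lesssim\|v\|_2$), and your Step 2 is then sound — indeed it gives the rate $\|w\|_s\lesssim\delta^{1/2}$, slightly better than the paper's $\delta^{1/4}$.

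The repair is to drop the uniform $v$-bound entirely and estimate the difference directly, which is what the paper does: in the equation for $z$, decompose not only the dispersive terms but also the quadratic term around the profile $p=u_0+\tau g$, i.e.\ $vv_x=zz_x+(pz)_x+pp_x$; then at the $L^2$ level the only dangerous self-interaction is $\delta\int_{\mathbb{T}} z^2z_x\,dx=0$, all remaining terms carry the smooth data $p,\zeta$, and the $s=0$ estimate closes with no a priori information on $v$. The paper then obtains $s=2$ via the Kawahara-adapted modified energy (multiplier $z^2+2z_{xx}-2z_{xxxx}$, mimicking the conserved quantity used in the proof of Proposition \ref{prp_existance}), $s=1$ by nonlinear interpolation between $s=0$ and $s=2$, and $s\ge3$ by induction, arriving at $\sup_t\|z(t)\|_s^2\le C\delta^{1/2}$. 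In short: your trick of splitting $v=w+p$ in the fifth- and third-order terms is precisely the right one; you just need to apply the same splitting to the quadratic term instead of routing through an $H^{s+1}$ bound on $v$, which is both unavailable at $s=0$ and unnecessary. Your closing caveat about justifying the energy identities by smoothing and passing to the limit is well taken (the paper is equally informal on this point).
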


The asymptotic property described above demonstrates that the controlled trajectory starting from \(u_0\) can approach the affine set \(u_0 + \mathcal{V}\) in a short time, where the set \(\mathcal{V}\) is determined by the nonlinearity of the equation. This insight motivates us to introduce the following subset:  

Recalling the definition of \(\mathcal{H}(I)\) from \eqref{span_set}, we now define the set 
\begin{align}\label{C(I)_defn}
	\mathcal{C}(I) := \Bigg\{\eta - \sum_{i=1}^d \zeta_i \partial_x \zeta_i \; \bigg| \; \eta, \zeta_i \in \mathcal{H}(I), \ \forall d \geq 1\Bigg\}.
\end{align}  
It is straightforward to observe that  
\[
\mathcal{H}(I) \subset \mathcal{C}(I), \quad \text{for all } I \subset \mathbb{Z}.
\]  

Moreover, there exists a beautiful and crucial relationship between the finite-dimensional subspace \(\mathcal{H}(I_{n + 1})\) and the set \(\mathcal{C}(I_n)\), which will be rigorously studied in the next section.

\section{Some Algebraic Results.}\label{sec_algebra}
We begin this section by presenting a fundamental result from algebra. Let \( I \) be a finite subset of \( \mathbb{Z} \). Define:  
\[
\mathcal{L}_{\mathbb{Z}}(I) := \left\{ \sum_{i=1}^{m} \lambda_i a_i \;\middle|\; \lambda_i \in \mathbb{Z}, \ a_i \in I, \ m \geq 1 \right\}.
\]
The set \( I \) is called a generator of \( \mathbb{Z} \) if \( \mathcal{L}_{\mathbb{Z}}(I) = \mathbb{Z} \). A subset \( I \) is a generator of \( \mathbb{Z} \) if and only if \(\gcd(a_1, a_2, \ldots, a_n) = 1\), where \( a_1, a_2, \ldots, a_n \in I \) are nonzero. To avoid ambiguity, we allow divisors to include negative integers as well.

Now, suppose a symmetric generator \( I \subset \mathbb{Z}^* \) is given. Define a sequence of subsets by induction for all \( n \in \mathbb{N} \):  
\[
\begin{aligned}
	& I_0 := I, \quad \text{and} \\
	& I_{n+1} := \{ i + j \mid i, j \in I_n \}.
\end{aligned}
\]
The following lemma will play a crucial role in demonstrating that we can reach all the Fourier modes to achieve the approximate controllability.

\begin{lemma}\label{lema_regarding_I_n}
If $I_0 \subset \mathbb{Z^*}$ be a symmetric generator of $\mathbb{Z}$ then, the followings are satisfied :
\begin{enumerate}[(i)]
\item For every $\alpha \in \mathbb{Z},$ there exists $n \in \mathbb{N}$ such that $\alpha \in I_n,$ i.e. , 
\begin{equation}\label{union_Z}
\bigcup\limits_{n \in \mathbb{N}} I_n = \mathbb{Z}.
\end{equation}
\item For all $I_n,$ we define $\mathcal{H}(I_n)$ and $\mathcal{C}(I_n)$ as \eqref{span_set} and \eqref{C(I)_defn} respectively then 
\begin{enumerate}[(a)]
\item for all $n \ge 1,$ we have the inclusion property 
\begin{align}\label{H(I_n+1)_subset_C(I_n)}
\mathcal{H}(I_{n + 1}) \subset C(I_n),
\end{align}	
\item for $n = 0,$ we only have 
\begin{align}\label{n+0_inclusion}
\text{ span}_{\mathbb{R}}\Big\{\sin(kx), \cos(kx); k \text{ is nonzero and belongs to } I_1\Big\} \subset C(I_0).
\end{align}
\end{enumerate}
\end{enumerate}
\end{lemma}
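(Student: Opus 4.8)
The plan is to prove the three assertions separately: part (i) by a short group-theoretic argument on the sumsets $I_n$, and part (ii) by an explicit trigonometric computation identifying which Fourier modes are produced by the quadratic term in $\mathcal{C}(I_n)$.

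For (i), I would first record that symmetry propagates through the Minkowski-sum construction: if $I_n=-I_n$ and $k=i+j\in I_{n+1}$ with $i,j\in I_n$, then $-k=(-i)+(-j)\in I_{n+1}$, so every $I_n$ is symmetric. Since $I_{n-1}$ is nonempty and symmetric, choosing any $a\in I_{n-1}$ gives $0=a+(-a)\in I_n$, hence $0\in I_n$ for every $n\ge 1$; consequently $I_n=I_n+\{0\}\subseteq I_n+I_n=I_{n+1}$, which yields the nondecreasing chain (the strict inclusion claimed in the introduction). Writing $J:=\bigcup_n I_n$, the element $0$, symmetry, and closure under addition (any two elements of $J$ lie in a common $I_N$ by monotonicity, and their sum lies in $I_{N+1}\subseteq J$) show that $J$ is a subgroup of $(\mathbb{Z},+)$. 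As $J\supseteq I_0$ and $I_0$ generates $\mathbb{Z}$, i.e.\ $\mathcal{L}_{\mathbb{Z}}(I_0)=\mathbb{Z}$, we conclude $J=\mathbb{Z}$, which is precisely \eqref{union_Z}.

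For (ii), I would reduce the two inclusions to showing that each pure mode $\sin(mx),\cos(mx)$ with $m\in I_{n+1}$ (and $m\neq 0$ in the case $n=0$) lies in $\mathcal{C}(I_n)$. This reduction is justified because $\mathcal{C}(I_n)$, by its definition \eqref{C(I)_defn}, is closed under addition and under positive scaling (since $c\,\zeta\partial_x\zeta=(\sqrt c\,\zeta)\partial_x(\sqrt c\,\zeta)$ for $c>0$ with $\sqrt c\,\zeta\in\mathcal{H}(I_n)$), so once both $\pm\sin(mx)$ and $\pm\cos(mx)$ are available for every relevant $m$, the entire real span is contained in $\mathcal{C}(I_n)$. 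The heart of the matter is the identity $\zeta\,\partial_x\zeta=\tfrac12\partial_x(\zeta^2)$ together with the polarization computation: taking $\zeta=\alpha\cos(px)+\beta\cos(qx)$ with $p,q\in I_n$ and $m=p+q$ gives
\[
-\zeta\,\partial_x\zeta=\tfrac{\alpha\beta(p+q)}{2}\sin\!\big((p+q)x\big)+\tfrac{\alpha^2 p}{2}\sin(2px)+\tfrac{\beta^2 q}{2}\sin(2qx)-\tfrac{\alpha\beta(p-q)}{2}\sin\!\big((p-q)x\big),
\]
so the target mode $\sin(mx)$ appears with nonzero coefficient precisely when $m=p+q\neq 0$ (explaining the exclusion of the trivial mode in \eqref{n+0_inclusion}), while the residual modes sit at the frequencies $2p,2q,p-q\in I_{n+1}$. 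Sign flexibility—replacing a $\cos$ factor by the corresponding $\sin$ factor flips the sign of the self-interaction $\sin(2px)$ term—together with an analogous computation on mixed $\sin$–$\cos$ products (which produces $\cos(mx)$) allows both signs and both trigonometric types to be realized.

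The main obstacle, and the step I would spend the most care on, is the \emph{cancellation of the residual modes} at $2p,2q,p\pm q$ so as to isolate a single clean Fourier mode. The mechanism is to superpose several quadratic building blocks $\sum_i\zeta_i\partial_x\zeta_i$ using the opposite signs generated by $\sin$- versus $\cos$-type factors to annihilate the self-interaction frequencies, and to absorb any residual frequency that happens to lie in $I_n$ into the free linear term $\eta\in\mathcal{H}(I_n)$; for $n\ge 1$ the inclusion $I_n\subseteq I_{n+1}$ from part (i) guarantees that these lower-order corrections stay within the affine structure, giving the full inclusion $\mathcal{H}(I_{n+1})\subseteq\mathcal{C}(I_n)$ of \eqref{H(I_n+1)_subset_C(I_n)}. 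For $n=0$ the linear term only supplies $I_0$-modes and the constant (zero) mode must be discarded, so the argument yields only the weaker statement \eqref{n+0_inclusion} for the nonzero elements of $I_1=I_0+I_0$, exactly as asserted.
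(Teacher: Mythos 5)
Your part (ii) is essentially the paper's own argument. The paper proves the inclusion through exactly the polarization identities you describe, recorded as \eqref{sin_alpha+sin_beta}--\eqref{sin_alpha_pm_cos_beta} for $\mathcal{B}(u)=u\partial_x u$, and isolates each mode by the same superposition mechanism you sketch: for instance, adding \eqref{sin_alpha+sin_beta} and \eqref{cos_alpha+cos_beta} cancels the $2p$, $2q$ and $p+q$ residuals simultaneously and leaves a clean multiple of $\sin((p-q)x)$, after which the symmetry of $I_n$ (replacing $q$ by $-q$) produces $p+q$ as well; mixed $\sin$--$\cos$ blocks give the cosine modes. Three small points. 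First, the $(p-q)$-term in your displayed identity should carry a $+$ sign (check against \eqref{cos_alpha+cos_beta}); this is harmless for your argument. Second, the residual frequencies $2p$, $2q$, $p\mp q$ lie in $I_{n+1}$ and in general \emph{not} in $I_n$, so the $\eta$-absorption you invoke is really only available for the constant mode, where what you need is $0\in I_n$ for $n\ge1$ (which your part (i) supplies); all other residuals must be killed by the paired blocks, as your "annihilation" mechanism correctly anticipates. Third, your explicit closure observations for $\mathcal{C}(I_n)$ (additivity in $d$, positive scaling via $\sqrt{c}\,\zeta$, and the reduction to producing $\pm\sin(mx)$, $\pm\cos(mx)$) are left implicit in the paper and are a genuine improvement in rigor; note also that the paper proves the sharper statement $1\notin\mathcal{C}(I_0)$ by a zero-mean argument, whereas you only show your construction does not produce the constant, which however suffices for the stated inclusion \eqref{n+0_inclusion}.

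Your part (i) takes a genuinely different route --- exhibiting $J=\bigcup_n I_n$ as a subgroup of $(\mathbb{Z},+)$ containing the generating set $I_0$ --- where the paper runs an explicit induction showing $\Gamma_n\subset I_{n-1}$ for the sets $\Gamma_n$ of integer combinations with $\sum_j|z_j|\le n$. But there is a concrete gap in your step "any two elements of $J$ lie in a common $I_N$ by monotonicity": as you yourself derive, $I_n\subseteq I_{n+1}$ only for $n\ge1$ (it requires $0\in I_n$, and $0\notin I_0\subset\mathbb{Z}^*$), so an element of $I_0$ that never reappears in later $I_n$ escapes the closure argument. With the recursion $I_{n+1}=\{i+j \mid i,j\in I_n\}$ read literally as a pure sumset this is fatal: for $I_0=\{\pm1\}$ one gets $I_1=\{0,\pm2\}$ and, inductively, every $I_n$ with $n\ge1$ consists of even integers, so $J=\{\pm1\}\cup 2\mathbb{Z}$ is not closed under addition and \eqref{union_Z} itself fails. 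You should be aware that the paper's own proof stumbles at the same spot: its induction is only carried out for $n>1$, and the missing step $\Gamma_2\subset I_1$ would require $0\in I_0$ (it also uses $\tau_1\in I_0\subset I_{n-1}$, i.e.\ $I_0\subseteq I_1$), which is false for $\{\pm1\}$ --- the flagship case of the main theorem. Both proofs, and the statement, are repaired by reading the recursion cumulatively, $I_{n+1}:=I_n\cup\{i+j \mid i,j\in I_n\}$, which is evidently what is intended; under that reading your subgroup argument closes completely (monotonicity from $n=0$, so $J$ is a symmetric, additively closed set containing $0$ and $I_0$, hence the subgroup generated by $I_0$, hence $\mathbb{Z}$) and is cleaner and more conceptual than the paper's induction. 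So: fix the $n=0$ link explicitly, and state the cumulative definition you are actually using; otherwise your plan is sound.
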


An immediate conclusion of the Lemma \ref{lema_regarding_I_n} is that if $I_0$ is a generator of $\mathbb{Z}$, the space  $$\mathcal{H}_{\infty}^{I} := \bigcup\limits_{n\in\mathbb{N}} \mathcal{H}(I_{n})$$  is dense in $H^s_0(\mathbb{T}),$ then for all $u_0, u_1 \in H^s_0(\mathbb{T}),$ we can find  large enough $N(\varepsilon) \in \mathbb{N}$  and an element $$u_{N(\varepsilon)} \in \mathcal{H}(I_{N(\varepsilon) + 1}) \subset C(I_{N(\varepsilon)})$$ such that $\|(u_1 - u_0) - u_{N(\varepsilon)}\|_s \le \varepsilon,$ then by Proposition \ref{prp_limit}, for the particular choice of $(\zeta, \eta) \equiv (0, u_{N(\varepsilon)})$ we obtain $$\mathcal{R}_{\delta}(u_0, 0, \delta^{-1} \eta) \to u_0 + u_{N(\varepsilon)} \ \text{ in } H^s, \ \text{ as } \delta \to 0^+.$$ Furthermore, by Proposition \ref{prp_continty}, we can steer the initial data $u_0$ close to $u_1$ in a small time by a $\mathcal{H}(I_{N(\varepsilon) + 1})$-valued control. This motivates us to investigate approximate controllability using controls that take values in a finite-dimensional subspace. In the next section, we demonstrate approximate controllability with controls confined to a finite-dimensional subspace, and moreover, we establish that the dimension of this subspace is optimal. Before move to that in details let us prove the Lemma \ref{lema_regarding_I_n}.

\begin{proof}[Proof of Lemma \ref{lema_regarding_I_n}]
(i). For \( n \in \mathbb{N}^* \), define:  
\[
\Gamma_n = \left\{ \sum_{j=1}^l z_j \tau_j \;\middle|\; z_j \in \mathbb{Z}, \ \tau_j \in I_0, \ \sum_{j=1}^l |z_j| \leq n, \ l \geq 1 \right\}.
\]  
Since \( I_0 \) is a generator of \( \mathbb{Z} \), it is evident that:  
\[
\bigcup_{n \in \mathbb{N}^*} \Gamma_n = \mathbb{Z}.
\]  
We now prove by induction that for all \( n \in \mathbb{N}^* \), \( \Gamma_n \subset I_{n-1} \).  

For \( n = 1 \), since \( I_0 \) is symmetric, we have \( \Gamma_1 = \{ \pm \tau \mid \tau \in I_0 \} = I_0 \). Thus, \( \Gamma_1 \subset I_0 \).  

Assume that \( \Gamma_n \subset I_{n-1} \) for some \( n > 1 \) with \( n \in \mathbb{N}^* \). Let \( p = \sum\limits_{j=1}^l z_j \tau_j \in \Gamma_{n+1} \). If \( \sum\limits_{j=1}^l |z_j| \leq n \), then \( p \in \Gamma_n \subset I_{n-1} \). Since \( 0 \in I_{n-1} \) for \( n \geq 2 \), it follows that \( p \in I_n \).

Now consider the case where \( \sum\limits_{j=1}^l |z_j| = n + 1 \). Without loss of generality, assume \( z_j \neq 0 \) for all \( j = 1, \ldots, l \). Define:  
\[
p' = (z_1 \pm 1)\tau_1 + \sum_{j=2}^l z_j \tau_j,
\]  
where \( |z_1 \pm 1| = |z_1| - 1 \). Then:  
\[
|z_1 \pm 1| + \sum_{j=2}^l |z_j| = n,
\]  
which implies \( p' \in \Gamma_n \). By the induction hypothesis, \( p' \in I_{n-1} \). Since \( \tau_1 \in I_0=\Gamma_1\subset\Gamma_n \subset I_{n-1} \), it follows from the definition of \( I_n \) that:  
\[
\tau_1 \mp \left( (z_1 \pm 1)\tau_1 + \sum_{j=2}^l z_j \tau_j \right) = \tau_1 \mp p' = p\in I_n.
\]  

Hence \( \Gamma_{n+1} \subset I_n \). By induction, \( \Gamma_n \subset I_{n-1} \) holds for all \( n \in \mathbb{N}^* \). Consequently:  
\[
\mathbb{Z}^* \subset \bigcup_{n \in \mathbb{N}} I_n.
\]  
This establishes the desired equality.

\noindent(ii). Let us first establish part (a). Take, $n \in \mathbb{N^*}$, as $I_0 \subset  \mathbb{Z^*}$ be symmetric, then $0 \in I_n.$ Now, using trigonometric identities, we obtain that for $\alpha, \beta \in \mathbb{Z},$
\begin{align}
&2 \mathcal{B}\Big(\sin (\alpha x) + \sin (\beta x)\Big) = \alpha \sin (2 \alpha x) + \beta \sin (2 \beta x) + (\alpha + \beta) \sin ((\alpha + \beta)x) - (\alpha - \beta) \sin ((\alpha - \beta) x), \label{sin_alpha+sin_beta}\\
&2 \mathcal{B}\Big(\cos (\alpha x) + \cos (\beta x)\Big) =  - \alpha \sin (2 \alpha x) - \beta \sin (2 \beta x) - (\alpha + \beta) \sin ((\alpha + \beta)x) - (\alpha - \beta) \sin ((\alpha - \beta) x), \label{cos_alpha+cos_beta}\\
&2 \mathcal{B}\Big(\sin (\alpha x) \pm \cos (\beta x)\Big) = \alpha \sin (2 \alpha x) - \beta \sin(2\beta x) \pm (\alpha + \beta) \cos ((\alpha + \beta)x) \pm (\alpha - \beta) \cos ((\alpha - \beta)x), \label{sin_alpha_pm_cos_beta}
\end{align}
where $\mathcal{B}(u) = u \partial_x u,$ the nonlinearity of the equation \eqref{ctrl_equn}. 

 Our aim to prove for $$ \text{ span}_{\mathbb{R}} \Big\{\sin (kx), \cos (kx) \ ; k \in I_{n + 1} \Big\} \subset \left \{ \eta - \sum_{i = 1}^{d} \zeta_i \partial_{x} \zeta_i \ ; \eta, \zeta_i \in \mathcal{H}(I_n) \right\}.$$ 

\noindent If $k = 2j,$ for some $j \in I_n,$ then
\begin{enumerate}
\item choosing $(\alpha, \beta) = (j, 0)$ in \eqref{sin_alpha+sin_beta} and $(\alpha, \beta) = (0, j)$ in \eqref{sin_alpha_pm_cos_beta}, we obtain $$\text{span}_{\mathbb{R}} \{ \sin (kx)\} \subset \mathcal{C}(I_n),$$
\item  taking $(\alpha, \beta) = (j, j)$ in \eqref{sin_alpha_pm_cos_beta}, we have $$\text{span}_{\mathbb{R}} \{ \cos (kx)\} \subset \mathcal{C}(I_n).$$
\end{enumerate}
If $k = (j \pm p),$ for some $j, p \in I_n$ and $(j \pm p) \ne 0.$ 
 Choosing $(\alpha, \beta) = (j, \pm p)$ in \eqref{sin_alpha+sin_beta}, \eqref{cos_alpha+cos_beta}  we see $$\text{span}_{\mathbb{R}} \{ \sin ((j \pm p)x)\} \subset \mathcal{C}(I_n),$$ where we have used the symmetricty of $I_n.$ By similar argument  $$\text{span}_{\mathbb{R}} \{ \cos ((j \pm p)x)\} \subset \mathcal{C}(I_n),$$ 
If $k = (j \pm p),$ for some $j, p \in I_n$ and $(j \pm p) = 0.$ Then the vector space, $\text{span}_{\mathbb{R}} \{ 1 \} \subset \mathcal{H}(I_{n + 1}).$ As $0 \in I_n$ then  $\text{span}_{\mathbb{R}} \{ 1 \} \subset \mathcal{H}(I_n) \subset \mathcal{C}(I_n),$ where we have essentially used the fact that $n \ge 1.$ Hence the inclusion is true for any $n \in \mathbb{N^*}.$

\noindent To prove part (b), perform a similar analysis on $I_0$ and $I_1$ as in part (a), with the exception of the case $(j \pm p) = 0.$ In this scenario, inclusion is not possible because when $(j + p) = 0,\ 0 \not \in  I_0$ (as  $I_0$ is a symmetric subset of  $\mathbb{Z^*}$), whereas $0 \in I_1.$ More over, for any $d \ge 1$ and $s_j, c_j, s^{i}_j, c^{i}_j \in \mathbb{R}$ one can see, $$1 \ne \displaystyle \sum_{j \in I_0, \atop j > 0}\Big(s_j \sin (jx) + c_j \cos (jx) \Big) + \sum_{i = 1}^{d} \mathcal{B}\left(\displaystyle \sum_{j \in I_0, \atop j > 0}\Big(s^{i}_j \sin (jx) + c^{i}_j \cos (jx) \Big) \right).$$ Consequently, 
$\text{span}_{\mathbb{R}} \{ 1 \} \subset \mathcal{H}(I_1),$ but $\text{span}_{\mathbb{R}} \{ 1 \} \not \subset \mathcal{C}(I_0).$ 
\end{proof}

We finish this section by giving an algebraic property of the flow map $\mathcal{R}_{t}$
\begin{lemma}\label{alg_r(t)}
Let $\mathcal{R}_t(u_0, 0, \eta)$ be the solution of \eqref{xtd_equn}, Where $\eta$ is given by $$\eta(s) = \mathbbm{1}_{[0, t_1)} \eta_1(s) + \mathbbm{1}_{[t_1, t_1 + t_2)} \eta_2(s) + \mathbbm{1}_{[t_1 + t_2, t_1 + t_2 + t_3)} \eta_3(s) $$ For all $t_1, t_2, t_3 \ge 0,$ we have the equality $$ \mathcal{R}_{t_1 +t_2 +t_3}(u_0,0, \eta) = \mathcal{R}_{t_3}(\mathcal{R}_{t_2}(\mathcal{R}_{t_1}(u_0, 0, \eta_1(\cdot)), 0, \eta_2(\cdot - t_1)),0, \eta_3(\cdot - t_2-t_1)).$$
\end{lemma}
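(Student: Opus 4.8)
The plan is to deduce the identity from two ingredients: the uniqueness of solutions guaranteed by Proposition \ref{prp_existance}, and the fact that \eqref{xtd_equn} with $\zeta = 0$ is autonomous. Apart from the time derivative $u_t$, the left-hand side $-u_{xxxxx} + u_{xxx} + u u_x$ carries no explicit dependence on $t$, so translating the time variable affects only the forcing term. Write $T := t_1 + t_2 + t_3$ and let $u := \mathcal{R}_{\cdot}(u_0, 0, \eta)$ denote the unique solution of \eqref{xtd_equn} on $[0, T]$ produced by Proposition \ref{prp_existance}. The idea is to identify $u$, subinterval by subinterval, with the corresponding staged evolution on the right-hand side, and then chain the three identifications together.

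On $[0, t_1]$ the concatenated control equals $\eta_1$, so the restriction $u|_{[0, t_1]} \in C([0, t_1]; H^s(\mathbb{T}))$ solves \eqref{xtd_equn} with datum $u_0$ and forcing $\eta_1$; uniqueness then gives $u(t) = \mathcal{R}_t(u_0, 0, \eta_1)$ for every $t \in [0, t_1]$, and in particular $u(t_1) = \mathcal{R}_{t_1}(u_0, 0, \eta_1) =: v_1$. For the middle interval I would introduce the time-shifted profile $w(\tau) := u(t_1 + \tau)$ for $\tau \in [0, t_2]$. By autonomy, $w$ solves \eqref{xtd_equn} on $[0, t_2]$ with initial datum $w(0) = v_1$ and forcing equal to the portion of $\eta$ supported on $[t_1, t_1 + t_2)$ realigned to the local time variable, which is the control $\eta_2(\cdot - t_1)$ appearing in the middle factor. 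Uniqueness again yields $w(\tau) = \mathcal{R}_\tau(v_1, 0, \eta_2(\cdot - t_1))$, hence $u(t_1 + t_2) = \mathcal{R}_{t_2}(v_1, 0, \eta_2(\cdot - t_1)) =: v_2$. Running the identical shift argument on $[t_1 + t_2, T]$ with $w(\tau) := u(t_1 + t_2 + \tau)$ gives $u(T) = \mathcal{R}_{t_3}(v_2, 0, \eta_3(\cdot - t_2 - t_1))$, and back-substituting $v_2$ and then $v_1$ reproduces exactly the nested expression claimed for $\mathcal{R}_T(u_0, 0, \eta)$.

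This argument carries no serious analytic content; the work lies entirely in the bookkeeping of the time-shifts. The step I expect to require the most care is verifying that the restriction of $u$ to a subinterval, and its translate $w$, genuinely solve \eqref{xtd_equn} in the same functional-analytic sense in which Proposition \ref{prp_existance} asserts uniqueness (namely the Bourgain-space formulation recalled in Section \ref{sec_prop}), so that the uniqueness statement is legitimately applicable to each piece, and that each realigned forcing still belongs to $L^2(0, t_j; H^s_0(\mathbb{T}))$. Continuity in time, guaranteed by $u \in C([0, T]; H^s(\mathbb{T}))$, ensures that the restarting values $v_1 = u(t_1)$ and $v_2 = u(t_1 + t_2)$ at the junction points are unambiguously defined and that the three pieces glue into the single global solution.
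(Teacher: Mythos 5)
Your proposal is correct and takes essentially the same approach as the paper: the paper simply packages your restrict-and-time-shift argument into a two-parameter flow $\widehat{\mathcal{R}}(t,s,v,\eta)$ whose cocycle identity \eqref{uni_R}, derived from uniqueness, is exactly your subinterval identification, with the autonomy of the equation (with $\zeta=0$) justifying the realignment of the forcing just as in your middle step. The point you flag as delicate---that restrictions and translates of the solution solve \eqref{xtd_equn} in the same sense in which uniqueness holds---is likewise implicit in the paper's appeal to uniqueness, so there is no gap.
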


\begin{proof} Let $0<s\le t$ and $\widehat{\mathcal{R}}(t, s, v, \eta)$  be the solution of \eqref{xtd_equn} at the instant $t,$ when $\zeta = 0$ and with initial data $\widehat{\mathcal{R}}(s, s, v, \eta) = v.$ That means $\mathcal{R}_{t}(u_0, 0, \eta ) = \widehat{\mathcal{R}}(t, 0, u_0, \eta).$ From the uniqueness of the solutions, we see that for all $\sigma \in [s, t]$,
\begin{align}\label{uni_R}
\widehat{\mathcal{R}}(t, \sigma, \widehat{\mathcal{R}}(\sigma, s, u_0, \eta), \eta ) = \widehat{\mathcal{R}}(t, s, u_0, \eta).	
\end{align}
Using \eqref{uni_R} we can write
\begin{align*}
&\widehat{\mathcal{R}}(t_1 + t_2 + t_3, 0, u_0, \eta)\\
& = \widehat{\mathcal{R}}(t_1 + t_2 + t_3, t_1 + t_2,\widehat{\mathcal{R}}(t_1 + t_2, t_1, \widehat{\mathcal{R}}(t_1, 0,  u_0, \eta_1(\cdot)), \eta_2(\cdot - t_1)), \eta_3(\cdot - t_2 -t_1))\\
& = \widehat{\mathcal{R}}(t_3, 0,\widehat{\mathcal{R}}(t_1 + t_2, t_1, \widehat{\mathcal{R}}(t_1, 0,  u_0, \eta_1(\cdot)), \eta_2(\cdot - t_1)), \eta_3(\cdot - t_2 -t_1)) \\
& = \mathcal{R}_{t_3}(\widehat{\mathcal{R}}(t_1 + t_2, t_1, \widehat{\mathcal{R}}(t_1, 0,  u_0, \eta_1(\cdot)), \eta_2(\cdot - t_1)), 0,  \eta_3(\cdot - t_2 -t_1)) \\
& = \mathcal{R}_{t_3}(\mathcal{R}_{t_2}(\mathcal{R}_{t_1}(u_0, 0, \eta_1(\cdot)), 0,  \eta_2(\cdot - t_1)), 0,  \eta_3(\cdot - t_2 -t_1)).
\end{align*}
	This completes the proof.
\end{proof}

\section{Approximate Controllability}\label{sec_mainthm}
We begin this section by introducing the concept of small-time approximate controllability for the control system \eqref{ctrl_equn}. The formal definition is presented below.
\begin{definition}\label{approxi_cntrl_small time}
Let $H$ be a Hilbert space and $\mathcal{G}$ be a vector space. We say that the equation \eqref{ctrl_equn} is small time approximately controllable in $H$ by the control values in $\mathcal{G}$ if for any $ \varepsilon > 0, \ \sigma > 0$ and any $u_{0}, u_1 \in H$, there exists a $\delta \in (0, \sigma)$ and a control $\eta \in L^{2}(0, \delta ; \mathcal{G})$ such that the solution $u$ of \eqref{ctrl_equn} satisfies 
$$\| \mathcal{R}_{\delta}(u_0, 0, \eta) - u_1\|_{H} \le \varepsilon.$$
\end{definition}
The main theorem follows from the proposition stated below, which is motivated by \cite{Jellouli_23}. The proof of the proposition is provided at the end of this section.
\begin{proposition}\label{prp_small_time_ctrl}
Let $s \in \mathbb{N}$ and $I $ be the finite subset of $\mathbb{Z^*}$ satisfying the following conditions :
\begin{itemize}
	\item[(i)] $I$ is symmetric,
	\item[(ii)] $I$ is a generator of $\mathbb{Z},$
\end{itemize}
then equation \eqref{ctrl_equn} is small time approximate controllable in $H^s_0(\mathbb{T})$ by a control values in $\mathcal{H}(I).$
\end{proposition}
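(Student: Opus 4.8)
The plan is to reduce the proposition, via density and the algebraic chain of Lemma \ref{lema_regarding_I_n}, to a single elementary reachability step supplied by the asymptotic property (Proposition \ref{prp_limit}), and then to climb the ladder $\mathcal{H}(I_0)\subset\mathcal{H}(I_1)\subset\cdots$ by an induction in which the nonlinearity manufactures the higher Fourier modes for free. First I would fix $u_0,u_1\in H^s_0(\mathbb{T})$ and $\varepsilon,\sigma>0$. Since $\mathcal{H}_\infty^I=\bigcup_n\mathcal{H}(I_n)$ is dense in $H^s_0(\mathbb{T})$ (a consequence of Lemma \ref{lema_regarding_I_n}(i)), I may choose $N$ and $w\in\mathcal{H}(I_{N+1})$ with $\|u_1-(u_0+w)\|_s<\varepsilon/2$, so it suffices to steer $u_0$ to within $\varepsilon/2$ of $u_0+w$ in time $<\sigma$ using an $\mathcal{H}(I)$-valued control. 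Because Proposition \ref{prp_limit} requires data in $H^{s+7}$, I would also replace $u_0$ by a smooth $\tilde u_0\in H^{s+7}_0(\mathbb{T})$ with $\|u_0-\tilde u_0\|_s$ tiny, prove the statement for $\tilde u_0$, and recover the general case at the end through the stability estimate (Proposition \ref{prp_continty}).

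The heart of the argument is the inductive claim $(S_n)$: from any smooth state $y$ one can inject, with $\mathcal{H}(I_0)=\mathcal{H}(I)$-valued control and in arbitrarily small time, any displacement $w\in\mathcal{H}(I_n)$, i.e.\ $\|\mathcal{R}_\delta(y,0,\eta)-(y+w)\|_s$ can be made as small as desired. The base case $(S_0)$ is immediate from Proposition \ref{prp_limit} with $(\zeta,\eta)=(0,w)$, giving $\mathcal{R}_\delta(y,0,\delta^{-1}w)\to y+w$ with the admissible constant control $\delta^{-1}w\in\mathcal{H}(I_0)$. For the step $(S_n)\Rightarrow(S_{n+1})$ I would use $\mathcal{H}(I_{n+1})\subset\mathcal{C}(I_n)$ (Lemma \ref{lema_regarding_I_n}(ii)) to write $w=\eta-\sum_{i=1}^d\zeta_i\partial_x\zeta_i$ with $\eta,\zeta_i\in\mathcal{H}(I_n)$, and realize $w$ as a composition (justified by iterating Lemma \ref{alg_r(t)}) of elementary moves: inject the linear part $\eta$ by $(S_n)$, and for each $i$, (a) inject the large shift $\delta^{-1/2}\zeta_i$ by $(S_n)$, (b) let the system evolve freely (zero control) over time $\delta$, which by Remark \ref{rmk_lip} and Proposition \ref{prp_limit} produces exactly the quadratic increment $-\zeta_i\partial_x\zeta_i$, since $\mathcal{R}_\delta(y_i+\delta^{-1/2}\zeta_i,0,0)=\mathcal{R}_\delta(y_i,\delta^{-1/2}\zeta_i,0)+\delta^{-1/2}\zeta_i\to y_i-\zeta_i\partial_x\zeta_i+\delta^{-1/2}\zeta_i$, and (c) inject $-\delta^{-1/2}\zeta_i$ by $(S_n)$ to cancel the residual shift. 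The net effect of the $d$ blocks is $y+\eta-\sum_i\zeta_i\partial_x\zeta_i=y+w$, and every control value used lies in $\mathcal{H}(I_0)$. The case $n=0$ is covered by part (b) of Lemma \ref{lema_regarding_I_n}: since all trajectories and controls are mean-zero, only the mean-zero modes are relevant and the constant-mode obstruction there is harmless.

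Granting $(S_n)$ for all $n$, I apply $(S_{N+1})$ to the smooth initial state to inject $w$, and combine this with the density and smoothing reductions to conclude small-time approximate controllability by $\mathcal{H}(I)$-valued controls.

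The main obstacle will be the control of errors through the composition. Each elementary move realizes its target only in the limit $\delta\to0^+$ (Proposition \ref{prp_limit}) and only approximately through the lower-level claim $(S_n)$, so the errors from step (a) and the inexact shift must be propagated through the free evolution (b) and the cancellation (c) by means of the Lipschitz bound of Proposition \ref{prp_continty}. The delicate point is that the intermediate states carry a shift of size $\sim\delta^{-1/2}$, which blows up as the evolution time $\delta$ shrinks; one must therefore order the limits carefully — fixing $\delta$ first, to pin down the quadratic increment, and only then choosing the $(S_n)$-errors small relative to the now-fixed stability constant — and verify that the constant $C$ in Proposition \ref{prp_continty} over the short interval $[0,\delta]$ does not overwhelm the $\delta^{-1/2}$ amplitude. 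This is precisely where the favorable Bourgain-space estimates underlying Propositions \ref{prp_existance}--\ref{prp_limit} are needed, and keeping the bookkeeping of $(\varepsilon,\delta,N)$ consistent across the $d$ blocks and the induction on $n$ is the most technical part of the proof.
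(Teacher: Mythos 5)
Your proposal is correct and follows essentially the same route as the paper: density of $\mathcal{H}^I_\infty$, smoothing of the initial datum to $H^{s+7}_0(\mathbb{T})$ recovered via Proposition \ref{prp_continty}, and an induction along the ladder $\mathcal{H}(I_n)$ whose elementary move --- inject the large shift $\delta^{-1/2}\zeta$, evolve freely to create $-\zeta\partial_x\zeta$ via Remark \ref{rmk_lip} and Proposition \ref{prp_limit}, then cancel the shift, concatenating with Lemma \ref{alg_r(t)} and fixing the evolution time before shrinking the injection errors --- is exactly the paper's scheme \eqref{ineq_1}--\eqref{ineq_5}. The only difference is bookkeeping: the paper formalizes your loop over the $d$ quadratic blocks as a second induction index $k$ via the sets $\mathcal{C}_k(I_n)$, whereas you handle the blocks sequentially within a single induction on $n$; your explicit treatment of the mean-zero/constant-mode issue at $n=0$ is a point the paper passes over more quickly.
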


\begin{proof}[\bf Proof of Theorem \ref{main_thm}]
	We continue the proof in following parts:
	
\noindent \textbf{Sufficient condition for controllability :} Here, we prove that if the given symmetric subset $I \subset \mathbb{Z^*}$ generates $\mathbb{Z},$ then equation \eqref{ctrl_equn} is approximate controllable by $\mathcal{H}(I)$ valued control in the sence of Definition \ref{approx_ctrl_defn}. Specifically, we aim to show that for given $\varepsilon, T > 0$ and $u_0, u_1 \in H^s_0(\mathbb{T}),$ there exists a control $\eta \in L^2(0, T; \mathcal{H}(I))$ such that solution of equation \eqref{ctrl_equn} satisfy 
\begin{align}\label{aim_any_time_ctrl}
\| \mathcal{R}_T(u_0, 0, \eta) - u_1 \|_s \le \varepsilon.
\end{align}
By the Proposition \ref{prp_small_time_ctrl} we have small-time approximate controllability for equation \eqref{ctrl_equn} using $\mathcal{H}(I)$-valued control. Then the key idea here is to design a $\mathcal{H}(I)$-valued control such that the corresponding trajectory remains arbitrarily close to $u_1$ for a given time duration.

First utilizing the time continuity of the solution and the continuous dependence on the initial data (Proposition \ref{prp_continty}),  we deduce that, there exist $r \in (0, \varepsilon)$ and $\tau > 0$ such that 
	\begin{align}\label{time_continty}
		\| \mathcal{R}_t(v, 0, 0) - u_1 \|_s \le \varepsilon, \ \text{ for all } t \in [0, \tau], \text{ and } v \in B_{H^s_0(\mathbb{T})}(u_1, r),
	\end{align}
	this says, the control free trajectory starting from $u_1$ remains close to $u_1$ for a certain time interval. 
	
	Now applying the Proposition \ref{prp_small_time_ctrl}, we get a small enough $\delta_1 \in (0, T)$ and a control $\hat{\eta}_1 \in L^2(0, \delta_1; \mathcal{H}(I))$ such that,
	\begin{align}\label{conclusion_small_time_ctrl}
		\| \mathcal{R}_{\delta_1}(u_0, 0, \hat{\eta}_1) - u_1\|_s \le r.
	\end{align}
	
			\begin{figure}[h!]
		\centering
		\begin{tikzpicture}[x=0.7 cm, y=0.7cm]
			\begin{pgfonlayer}{nodelayer}
				\node [style=none] (0) at (-1.5, 0) {};
				\node [style=none] (1) at (1.5, 0) {};
				\node [style=none] (2) at (0, 1.5) {};
				\node [style=none] (3) at (0, -1.5) {};
				\node [style=none] (4) at (0, 3.5) {};
				\node [style=none] (5) at (-3.5, 0) {};
				\node [style=none] (6) at (0, -3.5) {};
				\node [style=none] (7) at (3.5, 0) {};
				\node [style=Dotty] (8) at (0, 0) {};
				\node [style=Dotty] (9) at (-6.75, -3.75) {};
				\node [style=none] (10) at (-0.75, 0.25) {};
				\node [style=none] (11) at (-1, -2.5) {};
				\node [style=none] (12) at (1, -0.25) {};
				\node [style=none] (13) at (2.75, 1.5) {};
				\node [style=none] (14) at (0.5, 1) {};
				\node [style=none] (15) at (-1.75, 3) {};
				\node [style=none] (16) at (0.25, 0.85) {$r$};
				\node [style=none] (17) at (-1, 2) {$\varepsilon$};
				\node [style=none] (18) at (-4, -2.75) {$\delta_1$};
				\node [style=none] (19) at (-4.5, -1.90) {$\hat{\eta}_1$};
				\node [style=none] (20) at (-1.5, -1.25) {$\tau$};
				\node [style=none] (21) at (0.25, -0.75) {$\hat{\eta}_2$};
				\node [style=none] (22) at (-0.3, -2.00) {$\delta_2$};
				\node [style=none] (23) at (-6.75, -4.5) {$u_0$};
				\node [style=none] (24) at (0.5, 0) {$u_1$};
				\node [style=none] (25) at (-8.5, 3) {};
				\node [style=none] (26) at (-9.5, 3) {};
				\node [style=none] (27) at (-9.5, 2) {};
				\node [style=none] (28) at (-8.5, 2) {};
				\node [style=none] (29) at (-5.4, 3) {Controlled Trajectory};
				\node [style=none] (30) at (-6, 2) {Free Trajectory};
			\end{pgfonlayer}
			\begin{pgfonlayer}{edgelayer}
				\draw [bend left=45] (0.center) to (2.center);
				\draw [bend left=45] (2.center) to (1.center);
				\draw [bend left=45] (1.center) to (3.center);
				\draw [bend left=45] (3.center) to (0.center);
				\draw [bend left=45] (4.center) to (7.center);
				\draw [bend left=45] (7.center) to (6.center);
				\draw [bend left=45] (6.center) to (5.center);
				\draw [bend left=45] (5.center) to (4.center);
				\draw [in=165, out=0, looseness=1.25] (9) to (10.center);
				\draw [in=135, out=-30, looseness=1.25,dashed] (10.center) to (11.center);
				\draw [in=75, out=-105, looseness=1.25] (12.center) to (11.center);
				\draw [in=-90, out=90, looseness=1.25,dashed] (12.center) to (13.center);
				\draw [in=120, out=75] (14.center) to (13.center);
				\draw (8) to (2.center);
				\draw (8) to (15.center);
				\draw [in=-150, out=30, looseness=1.25] (26.center) to (25.center);
				\draw [in=-150, out=30, looseness=1.25,dashed] (27.center) to (28.center);
			\end{pgfonlayer}
		\end{tikzpicture}

		\label{fig:enter-label}
	\end{figure}

	Combining \eqref{conclusion_small_time_ctrl} and \eqref{time_continty} by stability property Proposition \ref{prp_continty} and Lemma \ref{alg_r(t)} we obtain,
	\begin{align}\label{on_off_ctrl_step1}
		\| \mathcal{R}_{\delta_1 + t}(u_0, 0, \eta) - u_1 \|_s \le \varepsilon \ \text{ for all } t \in [0, \tau],
	\end{align}
	where the $\mathcal{H}(I)$- valued control $\eta$ is defined as $$\eta : s \to \mathbbm{1}_{[0, \delta_1]} \hat{\eta}_1(s) + \mathbbm{1}_{(\delta_1, \delta_1 + \tau]} 0.$$
	If $\delta_1 + \tau > T,$ then the proof is done. Otherwise we continue the trajectory, first by small time controllability Proposition \ref{prp_small_time_ctrl} to reach in the ball $B_{H^s_0(\mathbb{T})}(u_1, r)$ and then apply \eqref{time_continty}. So using Lemma \ref{alg_r(t)}, we deduce that
	\begin{align}
		\| \mathcal{R}_{\delta_1 + t + \delta_2 + t}(u_0, 0, \tilde{\eta}) - u_1\|_s \le \varepsilon \ \text{ for all } t \in [0, \tau],
	\end{align}
	for some $\delta_2 > 0$ and $\tilde{\eta}$ be $\mathcal{H}(I)$- valued control.
	
	If $\delta_1 + \tau + \delta_2 + \tau > T$ then the proof is complete. Otherwise we continue this process and after a finite number of steps the control trajectory reach near to $u_1$ at time $T.$
	
	\noindent \textbf{Necessity condition for controllability :} Now assume, $I \subset \mathbb{Z^*}$ be a finite symmetric set, which is not a generator of $\mathbb{Z},$ then $$\gcd (i, \cdots, j) = d,$$ where $i, \cdots, j \in I$ and $d $ is some integer strictly bigger than $1.$ This implies, $$\bigcup\limits_{n \in \mathbb{N}} I_n = d\mathbb{Z}.$$  Thus, it is clear that, 
	\begin{align}\label{complement_element}
		\Big\{\cos ((d + 1)x), \sin ((d + 1)x)\Big\} \text{ is orthogonal to }  \mathcal{H}_{\infty}^{I}.
	\end{align}
	The reachable set from $u_0$ at time $T$ defined by,
	\begin{align}\label{reachable_set}
		\mathcal{A}_{u_0, T} := \Big\{ \mathcal{R}_T(u_0, 0, \eta) : \eta \in L^2(0, T; \mathcal{H}(I))\Big\}.
	\end{align}
	Now, let initial data $u_0 $ in $\mathcal{H}_{\infty}^{I}.$ As the space $\mathcal{H}_{\infty}^{I}$ is invariant under both linear dynamics and nonlinear term $u \partial_{x}u.$ So $\mathcal{A}_{u_0, T}$ is orthogonal to  the functions $\cos ((d + 1)x)$ and $ \sin ((d + 1)x),$ This implies $\mathcal{A}_{u_0, T}$ is not dense in $H^s(\mathbb{T}).$  Hence we can't have approximate controllability in $H^s(\mathbb{T}).$
\end{proof}

It remains to provide the proof of Proposition \ref{prp_small_time_ctrl}. Let us now present the formal proof.

\begin{proof}[Proof of Proposition \ref{prp_small_time_ctrl}]
Let us for the moment $u_0 \in H_0^{s + 7}(\mathbb{T})$,  $u_1 \in H^s_0(\mathbb{T})$ and $I_0 \subset \mathbb{Z^*}$, which is symmetric and generator of $\mathbb{Z},$ then by the Lemma \ref{lema_regarding_I_n}, $\mathcal{H}_{\infty}$ is dense in $H^s_0(\mathbb{T}),$ so for all $\varepsilon > 0,$ there exits $n\in \mathbb{N}$ and  $\tilde{u}_1 \in u_0 + \mathcal{H}(I_{n + 1})$ such that $$\| u_1 - \tilde{u}_1\|_s \le \varepsilon.$$ Moreover, in rest of the proof, we will call $w$ the element which belongs to $\mathcal{H}(I_{n + 1}) \subset \mathcal{C}(I_n),$ existence of such $w$ clear from second part of Lemma \ref{lema_regarding_I_n}.

To establish small-time global approximate controllability in \( H^s_0(\mathbb{T}) \), we proceed as follows:  

\begin{itemize}  
	\item[\underline{STEP 1}:] For any \( n \in \mathbb{N} \), we demonstrate that it is possible to reach arbitrarily close to \( u_0 + w \) from \( u_0 \) in a small time, for any \( u_0 \in H^{s + 7}_0(\mathbb{T}) \) and any \( w \in \mathcal{H}(I_{n + 1}) \subset \mathcal{C}(I_n) \).  
	
	\item[\underline{STEP 2}:] By leveraging the density of \( H_0^{s + 7}(\mathbb{T}) \) in \( H_0^s(\mathbb{T}) \) and the stability properties provided by Proposition \ref{prp_continty}, we extend the result to establish small-time global approximate controllability for \( u_0 \in H_0^s(\mathbb{T}) \).  
\end{itemize}  
Thus, to complete the proof, it suffices to establish Step 1.
  
For  $ I \subset \mathbb{Z}$ we first define the sets : 
\begin{align*}
\begin{cases}
&\mathcal{C}_k(I) := \Bigg\{ \eta - \sum_{i = 1}^{k} \zeta_i \partial_{x} \zeta_i \ ; \ \eta, \varphi_i \in \mathcal{H}(I) \Bigg\}  \ \text{ when }  k \in \mathbb{N^*},\\
& \mathcal{C}_0(I) := \mathcal{H}(I).
\end{cases}
\end{align*}
It is clear that for any $k \in \mathbb{N},$ $$\mathcal{C}_k(I) \subseteq \mathcal{C}(I), \ \text{ for any } I \subset \mathbb{Z}.$$
Now we prove the following property by double induction on both index $k$ and $n,$ 
\begin{align*}
\mathbb{P}_{n, k} \hspace{.5cm} &:\ \forall u_0 \in H^{s + 7}(\mathbb{T}), \forall \varepsilon > 0, \forall \sigma > 0, \forall w \in \mathcal{C}_k(I_n), \exists\ t \in(0,\sigma), \text{ and }  \exists \ \eta \in L^2(0, t;  \mathcal{H}(I_0)) \\
& \hspace{2cm}\textit{ such that }\ \| \mathcal{R}_t(u_0, 0, \eta) - (u_0 + w)\|_{s} \le \varepsilon.
\end{align*}

\noindent \textbf{Base case:} In this we will prove the property $\mathbb{P}_{0,0}.$ Let $u_0 \in H^{s + 7}, \ \varepsilon, \sigma > 0$ and $w \in \mathcal{C}_0(I_0)$ are given. Then $$w = \eta, \quad \text{ for some } \eta \in \mathcal{H}(I_0).$$ Now by the Proposition \ref{prp_limit}, for the couple $(0, \delta^{-1} \eta),$ we have $$\mathcal{R}_{\delta}(u_0, 0, \delta^{-1}\eta) \to u_0 + \eta, \ \text{ in } H^s(\mathbb{T}) \ \text{ as } \delta \to 0^{+}.$$
From the above limit we can found small enough $\delta_1 \in (0, \sigma)$ and a control $\bar{\eta} = \delta_1^{-1}\eta \in L^2(0, \delta_1; \mathcal{H}(I_0))$such that $$ \| \mathcal{R}_{\delta_1}(u_0, 0, \bar{\eta}) - (u_0 + w)\|_s \le \varepsilon.$$

\noindent \textbf{Inductive case:} Assume that $\mathbb{P}_{n, k}$ holds for some $(n, k) \in \mathbb{N} \times \mathbb{N^*}.$ Similarly like base case, first fix $u_0 \in H^{s + 7}, \ \varepsilon, \sigma > 0$ and $w \in \mathcal{C}_k(I_n),$ then $$v := w - \zeta \partial_{x} \zeta \in \mathcal{C}_{k + 1}(I_n),$$ where $\zeta \in \mathcal{H}(I_{n}).$

\noindent Now, using Asymptotic property \ref{prp_limit} and flow property \ref{uniqueness_property}, we have 
\begin{align*}
\mathcal{R}_{\delta}(u_0 + \delta^{-\frac{1}{2}} \zeta, 0, 0) -  \delta^{-\frac{1}{2}} \zeta \to (u_0 - \zeta \partial_{x} \zeta) \ \text{ in } H^s, \ \text{ as } \delta \to 0^+.
\end{align*}
Consequently, there exists $\theta_1 \in (0, \frac{\sigma}{3}),$ such that 
\begin{align}\label{ineq_1}
\| \mathcal{R}_{\theta_1}(u_0 + w + \theta^{-\frac{1}{2}}_1 \zeta, 0, 0 ) - (u_0 + w - \zeta \partial_{x} \zeta + \theta^{-\frac{1}{2}}_1 \zeta)\|_s \le \varepsilon.
\end{align}
Now, $\theta_1^{-\frac{1}{2}}$ is a fixed positve constant and $\zeta \in \mathcal{H}(I_n)$ so $w +  \theta^{-\frac{1}{2}}_1 \zeta \in \mathcal{C}_k(I_n),$ then by induction hypothesis there esists $\tau_1 \in (0, \frac{\sigma}{3}),$ and a control $\eta_1 \in L^2(0, \tau_1; \mathcal{H}(I_0))$ we have
\begin{align}\label{ineq_2}
\| \mathcal{R}_{\tau_1}(u_0, 0, \eta_1) - (u_0 + w +  \theta^{-\frac{1}{2}}_1 \zeta )\|_s \le \varepsilon.
\end{align}
Thus, by the Lemma \ref{alg_r(t)}, starting from $u_0$ we can reach close $u_0 + w - \zeta \partial_x \zeta + \theta^{-\frac{1}{2}}_1 \zeta,$ at time $\tau_1 + \theta_1$ by the control $$\eta : s \to \mathbbm{1}_{[0, \tau_1]} \eta_1(s) + \mathbbm{1}_{(\tau_1, \tau_1 + \theta_1]} 0,$$ since we have
\begin{align}\label{ineq_3}
&\Bigg\|\mathcal{R}_{ \tau_1 + \theta_1 }(u_0, 0, \eta) - (u_0 +  w - \zeta \partial_x \zeta  +  \theta^{-\frac{1}{2}}_1 \zeta ) \Bigg\|_{s}
 \le \Bigg\| \mathcal{R}_{\theta_1}(\mathcal{R}_{\tau_1}(u_0, 0, \eta_1), 0, 0) - \mathcal{R}_{\theta_1}(u_0 + w + \theta_1^{-\frac{1}{2}} \zeta, 0, 0)\Bigg\|_{s} \notag\\
& \hspace{4cm} + \Bigg\| \mathcal{R}_{\theta_1}(u_0 + w + \theta_1^{-\frac{1}{2}}\zeta, 0, 0) - (u_0 + w - \zeta \partial_x \zeta  + \theta_{1}^{-\frac{1}{2}}\zeta )\Bigg\|_{s} \le \varepsilon,
\end{align}
where for estimate of the first term we have used \eqref{ineq_2}, Proposition \ref{prp_continty} of the resolvent map $\mathcal{R}_{\theta_1}$ and  \eqref{ineq_1} for the second term.

As, $w$ and  $\zeta,$ are smooth enough and,  by induction hypothesis $\mathbb{P}_{n,k}$ is true for any $u_0 \in H^{s+7}(\mathbb{T})$ so, we can choose initial data $\hat{u}_0 = u_0 + w - \zeta \partial_{x} \zeta + \theta_1^{-\frac{1}{2}}\zeta.$ Then by induction hypothesis there exists a time $\tau_2 \in (0, \frac{\sigma}{3})$ and a control $\eta_2 \in L^2(0, \tau_2; \mathcal{H}(I_0))$ such that 
\begin{align}\label{ineq_4}
\| \mathcal{R}_{\tau_2}(\hat{u}_0, 0, \eta_2) - (\hat{u}_0 - \theta_1^{-\frac{1}{2}}\zeta) \|_s \le \varepsilon.
\end{align} 
Now by the control $\eta $ defined as, $$\eta : s \to \mathbbm{1}_{[0, \tau_1]} \eta_1(s) + \mathbbm{1}_{(\tau_1, \tau_1 + \theta_1]} 0 + \mathbbm{1}_{(\tau_1 + \theta_1, \tau_1 + \theta_1 +\tau_2] } \eta_2(s - \tau_1 - \theta_1),$$
using Lemma \ref{alg_r(t)}, \eqref{ineq_3} , \eqref{ineq_4} and the stability Proposition \ref{prp_continty} we have the following estimate
\begin{align}\label{ineq_5}
\| \mathcal{R}_{\tau_2 + \tau_1 + \theta_1}(u_0, 0, \eta) - (u_0 + v)\|_s \le \varepsilon.
\end{align}
Let $t := \tau_1 + \theta_1 + \tau_2 \in (0, \sigma)$ then we conclude the result holds for $\mathbb{P}_{n, k+1}.$

\noindent Thus, assuming the property for $\mathbb{P}_{n,k},$ we are able to deduce for $\mathbb{P}_{n, k+1},$ where $(n, k) \in \mathbb{N} \times \mathbb{N^*},$ althoughit is not enough, so for concluding the property for $(n, k) \in \mathbb{N} \times \mathbb{N},$ our next aims are the following :
\begin{enumerate}[(a)]
\item  $\mathbb{P}_{n, 0}$ is holds for any $n \in \mathbb{N}.$ \label{n,0_ind}
\item  For $n \in \mathbb{N},$ if $\mathbb{P}_{n, 0} $ is holds that implies the property is true for $\mathbb{P}_{n, 1}.$ 
\end{enumerate} 
To complete the argument of \eqref{n,0_ind}, we again use indution on $n.$ The base case $\mathbb{P}_{0, 0}$ is already done. Now assume the property holds for $\mathbb{P}_{n, 0}.$ So for any $\zeta \in \mathcal{H}(I_n)$ we can reach to $u_0 + \zeta$ from $u_0$ in small time by $\mathcal{H}(I_0)$ valued control.

 Let $w \in \mathcal{H}(I_{n + 1}) $ then by Lemma \ref{lema_regarding_I_n}, $w \in \mathcal{C}(I_n),$ so $$w \in  \mathcal{C}_k(I_n), \ \text{ for some } k.$$ Now we can repete the analysis, like \eqref{ineq_1} - \eqref{ineq_5} to conclude the property $\mathbb{P}_{n+1, 0}.$ Hence $\mathbb{P}_{n, 0}$ is holds for all $n \in \mathbb{N}.$
 
 For the argument (ii), choose $M \in \mathbb{N},$ and using (i), the property $\mathbb{P}_{M, 0}$ holds. Let $\zeta \in C_1(I_M)$ then $$\zeta = \zeta_1 - \zeta_2 \partial_{x} \zeta_2,$$ for some $\zeta_1, \zeta_2 \in \mathcal{H}(I_M).$ As $\mathbb{P}_{M, 0}$ is true then again by saimilar resoning, like \eqref{ineq_1} - \eqref{ineq_5} to obtain the desire result. Hence $\mathbb{P}_{n, k}$ is true for all $(n, k) \in \mathbb{N} \times \mathbb{N}.$

\end{proof}

\begin{remark}
In the statement of Theorem \ref{main_thm}, the symmetry condition on \( I \) is necessary. This is because there exist generating sets of the form  
\[  
I = \{a_1, a_2 \in \mathbb{Z}^+ : \gcd(a_1, a_2) = 1\},  
\]  
which generate \( \mathbb{Z} \), but for which  
\[  
\bigcup_{n \in \mathbb{N}} I_n \neq \mathbb{Z}^+.  
\]  
\end{remark}

%

\section{Proof of Proposition \ref{prp_existance} - \ref{prp_limit}.}\label{sec_prop} 
In this section, unless specified otherwise, \( C \) denotes a generic positive constant whose value may vary from line to line. When necessary, the dependence of \( C \) on certain parameters, denoted by $`` \cdot ",$ will be explicitly written as \( C(\cdot) \).
\subsection{The Bourgain Space and Its Properties}
Here, we present some tools and results that are essential for proving Propositions \ref{prp_existance}–\ref{prp_limit}. Recall that \( a = [u_0(x)] = \frac{1}{2\pi} \int_{\mathbb{T}} u_0(x) \, dx \). Next, we define the unbounded operator \(\left( \mathcal{A}, \mathcal{D}(\mathcal{A}; L^2(\mathbb{T})) \right)\) in \(L^2(\mathbb{T})\) as follows: 

\[
\mathcal{D}(\mathcal{A}; L^2(\mathbb{T})) = H^5(\mathbb{T}),
\]
\[
\mathcal{A} = \frac{d^5}{dx^5} - \frac{d^3}{dx^3} - a \frac{d}{dx}.
\]

The adjoint \(\mathcal{A}^*\) of \(\mathcal{A}\), with the domain \(\mathcal{D}(\mathcal{A}^*; L^2(\mathbb{T})) = \mathcal{D}(\mathcal{A}; L^2(\mathbb{T}))\), is given by:
\[
\mathcal{A}^* = -\frac{d^5}{dx^5} + \frac{d^3}{dx^3} + a \frac{d}{dx} = -\mathcal{A}.
\]

Thus, \(\mathcal{A}\) is a skew-adjoint operator. By the Stone Theorem (\cite[Theorem 3.8.6]{Tucsnak_book}), \(\mathcal{A}\) generates a strongly continuous unitary group \(\left\{ \mathcal{W}(t) \right\}\) on \(L^2(\mathbb{T})\).

The eigenfunctions of \(\left( \mathcal{A}, \mathcal{D}(\mathcal{A}; L^2(\mathbb{T})) \right)\) are the orthonormal Fourier basis functions in \(L^2(\mathbb{T})\):
\[
\phi_k(x) = \frac{1}{\sqrt{2\pi}} e^{ikx}, \quad k \in \mathbb{Z}.
\]

The corresponding eigenvalue of \(\phi_k\) is:
\[
\lambda_k = \left( k^5 + k^3 - ak \right)i, \quad k \in \mathbb{Z}.
\]
We now introduce the Bourgain space, which will be instrumental in proving Proposition \ref{prp_existance}. For given \( s, b \in \mathbb{R} \) and a function \( u: \mathbb{R} \times \mathbb{T} \rightarrow \mathbb{R} \), we define the norms 

\[
\| u \|_{X_{b,s}} := \left( \sum_{k = -\infty}^{\infty} \int_{\mathbb{R}} \langle k \rangle^{2s} \langle \tau - k^5 - k^3 + ak \rangle^{2b} | \hat{u}(\tau, k) |^2 \, d\tau \right)^{\frac{1}{2}},
\]

and 

\[
\| u \|_{Y_{b,s}} := \left( \sum_{k = -\infty}^{\infty} \left( \int_{\mathbb{R}} \langle k \rangle^{s} \langle \tau - k^5 - k^3 + ak \rangle^{b} | \hat{u}(\tau, k) | \, d\tau \right)^2 \right)^{\frac{1}{2}},
\]

where \( \langle \cdot \rangle = \sqrt{1 + | \cdot |^2} \), and \( \hat{u}(\tau, k) \) denotes the Fourier transform of \( u \) with respect to the time variable \( t \) and the spatial variable \( x \). 

The Bourgain space \( X_{b,s} \) (resp. \( Y_{b,s} \)) associated with the Kawahara equation on \( \mathbb{T} \) is defined as the completion of the Schwartz space \( \mathcal{S}(\mathbb{R} \times \mathbb{T}) \) under the norm \( \| u \|_{X_{b,s}} \) (resp. \( \| u \|_{Y_{b,s}} \)). We also define the space 

\[
Z_{b,s} = X_{b,s} \cap Y_{b-\frac{1}{2},s},
\]

equipped with the norm 

\[
\| u \|_{Z_{b,s}} = \| u \|_{X_{b,s}} + \| u \|_{Y_{b-\frac{1}{2},s}}.
\]

For a given time interval \( I \), we define the restriction spaces \( X_{b,s}(I) \) (resp. \( Z_{b,s}(I) \)) as 

\[
\| u \|_{X_{b,s}(I)} = \inf \left\{ \| \tilde{u} \|_{X_{b,s}} \, \middle| \, \tilde{u} = u \text{ on } I \times \mathbb{T} \right\},
\]

\[
\text{(resp. } \| u \|_{Z_{b,s}(I)} = \inf \left\{ \| \tilde{u} \|_{Z_{b,s}} \, \middle| \, \tilde{u} = u \text{ on } I \times \mathbb{T} \right\} \text{)}.
\]

For simplicity, we denote \( X_{b,s}(I) \) (resp. \( Z_{b,s}(I) \)) by \( X^T_{b,s} \) (resp. \( Z^T_{b,s} \)) when \( I = (0, T) \).

We now gather some properties of the Bourgain spaces \( X^T_{b,s} \) and \( Z^T_{b,s} \), which will play a crucial role in establishing the controllability of the nonlinear Kawahara equation.
\begin{lemma}\label{Bourgain_space}
	Let $ s \ge 0,$ $ T > 0$, and $ I $ be a given time interval. The following properties hold:
	\begin{enumerate}
		\item \label{Bourgain_space_inclusion}  If $b_1 \le b_2$ and $s_1 \le s_2,$ then $ X_{b_2, s_2}$is continuously embedded in $X_{b_1, s_1},$ i.e., $ X_{b_2, s_2}  \lhook\joinrel\xrightarrow{\text{Cts}} X_{b_1, s_1}.$
		\item \label{Bourgain_space_in_continuous}  
		\( Z_{\frac{1}{2}, s}(I) \lhook\joinrel\xrightarrow{\text{Cts}} C(\overline{I}; H^s(\mathbb{T})) \) for any \( s \in \mathbb{R} \).  
		
		\item \label{semigrp_est}  
		There exists a constant \( C = C(s, T) > 0 \) such that:  
		\begin{enumerate}  
			\item[(i)] For any \( \phi \in H^s(\mathbb{T}) \),  
			\[    \| \mathcal{W}(\cdot) \phi \|_{X^T_{\frac{1}{2}, s}} \leq C \|\phi\|_s, \quad      \| \mathcal{W}(\cdot) \phi \|_{Z^T_{\frac{1}{2}, s}} \leq C \|\phi\|_s.;     \]  
			
			\item[(ii)] for any \( f \in Z^T_{-\frac{1}{2}, s} \),  
			\[    \left\| \int_0^{\cdot} \mathcal{W}(\cdot - \tau) f(\tau) \, d\tau \right\|_{Z^T_{\frac{1}{2}, s}} \leq C \|f\|_{Z^T_{-\frac{1}{2}, s}}.      \]  
		\end{enumerate}  
		
		\item \label{bilinear_est}  
		Let \( \theta \in (0, 1) \) and \( u, v \in X^\theta_{\frac{1}{2}, s} \cap L^2(0, \theta; L_0^2(\mathbb{T})) \). Then there exist constants \( \alpha > 0 \) and \( C > 0 \), independent of \( \theta \), \( u \), and \( v \), such that  
		\[\| (uv)_x \|_{Z^\theta_{-\frac{1}{2}, s}} \leq C \theta^\alpha \|u\|_{X^\theta_{\frac{1}{2}, s}} \|v\|_{X^\theta_{\frac{1}{2}, s}} \leq C \theta^\alpha \|u\|_{Z^\theta_{\frac{1}{2}, s}} \|v\|_{Z^\theta_{\frac{1}{2}, s}}.  \]  
		
		\item \label{L^2_Bourgain_space}  
		For any \( u \in L^2(0, T; H^s(\mathbb{T})) \), there exists a constant \( C = C(s, T) > 0 \) such that  
		\[\| u \|_{Z^T_{-\frac{1}{2}, s}} \leq C \| u \|_{L^2(0, T; H^s(\mathbb{T}))}.  \]  
		
		\item \label{H_s+5_in_Bourgain}  
		For any \( u \in H^{s+5}(\mathbb{T}) \), there exists a constant \( C = C(s, T) > 0 \) such that  
		\[\| u \|_{X^T_{\frac{1}{2}, s}} \leq C \| u \|_{{s+5}}.  \]  
	\end{enumerate}
\end{lemma}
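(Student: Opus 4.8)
The plan is to dispose of the elementary items first and to concentrate the real effort on the bilinear estimate \ref{bilinear_est}, which is the analytic heart of the lemma. Throughout I write $\omega(k) = k^5 + k^3 - ak$ for the symbol appearing in the weights, so that the eigenvalues are $\lambda_k = i\omega(k)$ and the modulation weight is $\langle \tau - \omega(k)\rangle$. Item \ref{Bourgain_space_inclusion} is immediate: since $\langle\cdot\rangle\geq 1$, the hypotheses $b_1\leq b_2$, $s_1\leq s_2$ give the pointwise bound $\langle k\rangle^{2s_1}\langle\tau-\omega(k)\rangle^{2b_1}\leq\langle k\rangle^{2s_2}\langle\tau-\omega(k)\rangle^{2b_2}$ on the integrand, and integrating yields the embedding. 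For item \ref{L^2_Bourgain_space} the $X_{-\frac12,s}$ part follows from $\langle\tau-\omega(k)\rangle^{-1}\leq 1$, while the $Y_{-1,s}$ part follows by Cauchy--Schwarz in $\tau$ using that $\langle\cdot\rangle^{-2}$ is integrable; the factor $C(s,T)$ enters only through the extension operator defining the restriction norm. For item \ref{H_s+5_in_Bourgain}, regard $u=u(x)\in H^{s+5}$ as time-independent after multiplication by a fixed temporal cutoff, so its space-time Fourier transform factors as $\hat\chi(\tau)\hat u(k)$; since $\int\langle\tau-\omega(k)\rangle\,|\hat\chi(\tau)|^2\,d\tau\lesssim\langle\omega(k)\rangle\lesssim\langle k\rangle^{5}$, one gets $\|u\|_{X^T_{\frac12,s}}\lesssim\|u\|_{s+\frac52}\leq C\|u\|_{s+5}$, with room to spare.

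For item \ref{Bourgain_space_in_continuous} the role of the auxiliary space $Y$ at the endpoint $b=\tfrac12$ is decisive. Writing $u(t,x)=\sum_k \bigl(\int \hat u(\tau,k)e^{it\tau}\,d\tau\bigr)e^{ikx}$, the elementary bound $|\int \hat u(\tau,k)e^{it\tau}\,d\tau|\leq\int|\hat u(\tau,k)|\,d\tau$ gives $\sup_t\|u(t)\|_s\leq\|u\|_{Y_{0,s}}\leq\|u\|_{Z_{\frac12,s}}$, and continuity in $t$ follows by dominated convergence since $\langle k\rangle^s\hat u(\tau,k)\in L^1_\tau\ell^2_k$. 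Item \ref{semigrp_est}(i) is the standard transference computation: after inserting a Schwartz cutoff $\psi$ in time, $\widehat{\psi\,\mathcal W\phi}(\tau,k)=\hat\psi(\tau-\omega(k))\hat\phi(k)$, whence $\|\psi\,\mathcal W\phi\|^2_{X_{\frac12,s}}=\|\phi\|_s^2\int\langle\mu\rangle|\hat\psi(\mu)|^2\,d\mu=C\|\phi\|_s^2$, and similarly for the $Y$-component giving the $Z$-bound. Item \ref{semigrp_est}(ii), the inhomogeneous Duhamel estimate $Z^T_{-\frac12,s}\to Z^T_{\frac12,s}$, is proved by the familiar Bourgain argument of splitting the time integral against the cutoff and treating the near- and far-modulation regimes separately; this is routine once (i) is in hand.

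The genuine obstacle is the bilinear estimate \ref{bilinear_est}. The mechanism is the strong dispersion of the fifth-order operator, quantified by the resonance function for the interaction $k=k_1+k_2$,
\[
h(k_1,k_2):=\omega(k_1+k_2)-\omega(k_1)-\omega(k_2)=k_1 k_2\,(k_1+k_2)\bigl[5(k_1^2+k_1 k_2+k_2^2)+3\bigr].
\]
On the mean-zero periodic setting all of $k_1,k_2,k$ are nonzero integers, so $|k_1 k_2 k|\geq 1$ and $5(k_1^2+k_1k_2+k_2^2)+3\gtrsim k_1^2+k_2^2$, giving the lower bound $|h|\gtrsim |k_1 k_2 k|\,(k_1^2+k_2^2)$. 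The plan is to pass to the dual, $L^2$-based formulation of $\|(uv)_x\|_{X_{-\frac12,s}}$, use the dispersion identity $\max\bigl(\langle\tau-\omega(k)\rangle,\langle\tau_1-\omega(k_1)\rangle,\langle\tau_2-\omega(k_2)\rangle\bigr)\gtrsim|h|$ to place the dominant modulation on one of the three factors, and thereby absorb the derivative loss from $(uv)_x$ and the high-frequency weight $\langle k\rangle^s$ into the gain $|h|^{-1/2}$; the remaining sum over frequencies is then controlled by a Cauchy--Schwarz/counting argument on the level sets of $\omega$, together with an $L^4$-type smoothing estimate and the companion bound for the $Y_{-1,s}$ component. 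The short-time factor $\theta^\alpha$ is obtained by first proving the estimate with a strictly subcritical weight $X_{-\frac12+\delta,s}$ on the whole line and then invoking the standard restriction inequality $\|f\|_{X^\theta_{b',s}}\lesssim\theta^{b-b'}\|f\|_{X^\theta_{b,s}}$ for $b'<b$. The main difficulty, and where the argument must be carried out carefully, is the frequency bookkeeping in the high-high-to-low and high-low regimes, ensuring that the loss of one derivative from $(uv)_x$ together with $s$ derivatives on the output is strictly dominated by the $|h|^{1/2}$ smoothing uniformly in all admissible $(k_1,k_2)$.
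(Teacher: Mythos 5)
Your proposal should be measured against the fact that the paper proves very little of this lemma itself: items (1)--(4) are simply cited to Bourgain '93, Hirayama '12, and Tao '06, item (5) is dispatched in one line, and only item (6) receives a written proof. Your elementary proofs of (1), (2), (3)(i) and (5) are correct and supply details that the paper delegates to the literature. For (6) you take a genuinely different route: the paper inserts a time cutoff $\beta$ and uses the embedding $X_{1,s}\hookrightarrow X_{1/2,s}$ together with the characterization of the $X_{1,s}$ norm through the operator $\partial_t-\partial_x^5+\partial_x^3+a\partial_x$, which costs the full five spatial derivatives and lands exactly on $\|u\|_{s+5}$; your direct Fourier factorization $\widehat{\chi u}(\tau,k)=\hat\chi(\tau)\hat u(k)$ with $\int_{\mathbb{R}}\langle\tau-\omega(k)\rangle|\hat\chi(\tau)|^2\,d\tau\lesssim\langle\omega(k)\rangle\lesssim\langle k\rangle^5$ is equally valid and in fact sharper, yielding $\|u\|_{X^T_{1/2,s}}\lesssim\|u\|_{s+5/2}$, which implies the stated bound with room to spare.

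The genuine gap is item (4), and to a lesser extent (3)(ii). For (4) you give a correct strategy, not a proof. The resonance identity $h=k_1k_2(k_1+k_2)\bigl[5(k_1^2+k_1k_2+k_2^2)+3\bigr]$ is right (and with $\beta=1$ the bracket is strictly positive, so there are no nontrivial resonances), but the dualization, the pigeonholing of the maximal modulation against $|h|^{1/2}$, the $L^4$-type smoothing estimate, and the level-set counting are precisely the content of the references the paper cites, and you stop exactly where you yourself locate ``the main difficulty'': the uniform frequency bookkeeping in the high-high and high-low regimes. Three concrete points are left open. First, your assertion that $k_1,k_2,k$ are all nonzero needs the remark that the derivative in $(uv)_x$ annihilates the output mode $k=0$; mean-zeroness of $u,v$ only forces $k_1,k_2\neq 0$, and the lower bound $|h|\gtrsim|k_1k_2k|\,(k_1^2+k_2^2)$ fails when $k=k_1+k_2=0$. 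Second, the $Y_{-1,s}$ half of the $Z_{-1/2,s}$ norm --- an $L^1_\tau$ rather than $L^2_\tau$ bound on the modulation --- is the genuinely delicate part of the endpoint estimate (this space exists precisely because the $X_{-1/2,s}$ bound alone cannot close the Duhamel loop at $b=\tfrac12$); you name it as ``the companion bound'' but never prove it. Third, your route to the factor $\theta^\alpha$ via $\|f\|_{X^\theta_{b',s}}\lesssim\theta^{\,b-b'}\|f\|_{X^\theta_{b,s}}$ requires both exponents strictly inside $(-\tfrac12,\tfrac12)$; as written you would apply it down to $b'=-\tfrac12$, where it fails in general. The fix --- prove the whole-line estimate with output in $X_{-\frac12+\delta,s}$, localize from $-\tfrac12+\delta$ to $-\tfrac12+\tfrac{\delta}{2}$, then embed by item (1) --- is easy but must be stated. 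Since the paper itself only cites the literature for (1)--(4), your sketch of (3)(ii) (splitting against the cutoff, near- versus far-modulation) is at the same level of rigor as the paper's treatment; but a self-contained proof of the lemma would require actually executing the bilinear analysis in (4), which your proposal plans rather than performs.
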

\begin{proof}  
	The proofs of properties \eqref{Bourgain_space_inclusion}–\eqref{bilinear_est} can be found in \cite{Bourgain_93, Hirayama_12, Tao_06}. Property \eqref{L^2_Bourgain_space} follows directly from the definition of \( Z^{T}_{-\frac{1}{2}, s} \). Therefore, it remains to establish property \eqref{H_s+5_in_Bourgain}.  
	
	Let \(\beta \in C^{\infty}_0 (\mathbb{R})\) be a smooth function such that  
	\[
	\beta(t) = 
	\begin{cases} 
		1 & \text{for } 0 \leq t \leq T, \\ 
		0 & \text{for } t \leq -1 \text{ or } t \geq T+1.  
	\end{cases}
	\]  
	
	Then, for any \(u \in X^T_{\frac{1}{2}, s}\), we have \(\beta u \in X^T_{\frac{1}{2}, s}\) (see Lemma 2.11 in \cite{Tao_06} for details).  
	
	Using property \eqref{Bourgain_space_inclusion} and the definition of \(X_{1,s}\), we obtain  
	\begin{align*}
		\|u\|^2_{X^T_{\frac{1}{2}, s}} & \le \| \beta u\|^2_{X_{\frac{1}{2}, s}} \\
		& \le C \|\beta u \|^2_{X_{1, s}} \\
		& = C \Big( \|\beta u\|^2_{L^2(\mathbb{R}; H^s(\mathbb{T}))} + \|(\beta u)_t -(\beta u)_{xxxxx} + (\beta u)_{xxx} + a (\beta u)_{x}\|^2_{L^2(\mathbb{R}; H^s(\mathbb{T}))} \Big) \\
		& \le C \Big( \|\beta u\|^2_{L^2(\mathbb{R}; H^s(\mathbb{T}))} + \|(\beta' u)\|^2_{L^2(\mathbb{R}; H^s(\mathbb{T}))} + \|(\beta u)_{xxxxx}\|^2_{L^2(\mathbb{R}; H^s(\mathbb{T}))}\\
		& \hspace{5cm}+ \|(\beta u)_{xxx}\|^2_{L^2(\mathbb{R}; H^s(\mathbb{T}))} + \| a(\beta u)_{x}\|^2_{L^2(\mathbb{R}; H^s(\mathbb{T}))} \Big) \\
		& \le C \|u \|^2_{s+5}.
	\end{align*}
\end{proof}
Now, we are in a position to prove Proposition \ref{prp_existance}.
\subsection{Proof of Proposition \ref{prp_existance}}
\begin{proof}
	Let \(\Tilde{u} = u - a\), where \(a = [u_0]\). If \(u\) satisfies \eqref{xtd_equn}, then \(\Tilde{u}\) satisfies the following equation:  
	
	\begin{align}\label{u_tilde_equn}
		\begin{cases}  
			\Tilde{u}_t - \Tilde{u}_{xxxxx} + \Tilde{u}_{xxx} + a \Tilde{u}_x + (\Tilde{u} \zeta)_x + \Tilde{u} \Tilde{u}_x = \varphi + \zeta_{xxxxx} - \zeta_{xxx} - a \zeta_x - \zeta \zeta_x, & \text{in } (0, T) \times \mathbb{T}, \\  
			\Tilde{u}(0) = \Tilde{u}_0 := u_0 - a, & \text{in } \mathbb{T}.  
		\end{cases}  
	\end{align}  
	
	We can see that $\left[ \Tilde{u}\right] =0$. Let \(\theta \in (0, \min\{1, T\})\) and \(v \in Z^{\theta}_{\frac{1}{2}, s} \cap L^2(0, \theta; L^2_0(\mathbb{T}))\). Define  
	\[
	\Phi(v)(t) = \mathcal{W}(t) \Tilde{u}_0 + \int_0^t \mathcal{W}(t - s) \Big( -v v_x - (v \zeta)_x + \varphi + \zeta_{xxxxx} - \zeta_{xxx} - a \zeta_x - \zeta \zeta_x \Big)(s) \, ds,  
	\]  
	where \(\{\mathcal{W}(t)\}\) is the strongly continuous unitary group on \(L^2(\mathbb{T})\) generated by the linear operator \(\mathcal{A}\). Then using Lemma \ref{Bourgain_space}, we get
	\begin{align*}
		\| \Phi (v)\|_{Z^{\theta}_{\frac{1}{2}, s}} & \le \| \mathcal{W}( \cdot ) \Tilde{u}_0 \|_{Z^{T}_{\frac{1}{2}, s}} + \left\|\int_0^{\cdot} \mathcal{W}(\cdot - s) \Big( - v v_x - (v \zeta)_x - \zeta \zeta_x \Big) (s) ds \right\|_{Z^{\theta}_{\frac{1}{2}, s}} \\
		&\hspace{2cm} + \left\|\int_0^{\cdot} \mathcal{W}(\cdot - s) \Big( \varphi + \zeta_{xxxxx} - \zeta_{xxx} - a \zeta_x\Big) (s) ds \right\|_{Z^{T}_{\frac{1}{2}, s}}\\
		& \le C \Bigg(\|\tilde{u}_0\|_s +\|(v^2)_x\|_{Z^{\theta}_{-\frac{1}{2}, s}} + \|(v \zeta)_x\|_{Z^{\theta}_{-\frac{1}{2}, s}}  + \| (\zeta^2)_x \|_{Z^{\theta}_{-\frac{1}{2}, s}}+ \| \varphi \|_{Z^{T}_{-\frac{1}{2}, s}}\\&\hspace{2cm} + \| \zeta_{xxxxx} \|_{Z^{T}_{-\frac{1}{2}, s}} + \| \zeta_{xxx} \|_{Z^{T}_{-\frac{1}{2}, s}} + \| \zeta_x \|_{Z^{T}_{-\frac{1}{2}, s}} \Bigg)\\
		& \le C \Bigg(\|\tilde{u}_0\|_s + \| \varphi \|_{Z^{T}_{-\frac{1}{2}, s}} + \| \zeta_{xxxxx} \|_{Z^{T}_{-\frac{1}{2}, s}} + \| \zeta_{xxx} \|_{Z^{T}_{-\frac{1}{2}, s}} + \| \zeta_x \|_{Z^{T}_{-\frac{1}{2}, s}} \Bigg)\\
		& \hspace{2cm} + C \theta^{\alpha} \Bigg( \|v\|^2_{Z^{\theta}_{\frac{1}{2}, s}} + \|v\|_{Z^{\theta}_{\frac{1}{2}, s}} \| \zeta\|_{X^{\theta}_{\frac{1}{2}, s}} + \| \zeta \|^2_{X^{\theta}_{\frac{1}{2}, s}} \Bigg)\\
		& \le  C \Bigg( \|\Tilde{u}_0\|_s + \|\varphi \|_{Z^{T}_{-\frac{1}{2}, s}} + \| \zeta_{xxxxx} \|_{Z^{T}_{-\frac{1}{2}, s}} + \|\zeta_{xxx} \|_{Z^{T}_{-\frac{1}{2}, s}}  + \| \zeta_x \|_{Z^{T}_{-\frac{1}{2}, s}}+ \|\zeta \|^2_{X^{T}_{\frac{1}{2}, s}}\Bigg) + C \theta^{\alpha} \|v\|^2_{Z^{\theta}_{\frac{1}{2}, s}}\\
		& \le C_1 \Big( 1 + \|u_0\|_s + \|\varphi\|_{L^2(0, T; H^s(\mathbb{T}))} + \| \zeta \|^2_{s + 5} \Big) + C_2 \theta^{\alpha} \| v\|^2_{{Z^{\theta}_{\frac{1}{2}, s}}},
	\end{align*}  
	where \(C_1, C_2\) are constants independent of \(\theta\).   For any \(v_1, v_2 \in Z^{\theta}_{\frac{1}{2}, s} \cap L^2(0, \theta; L^2_0(\mathbb{T}))\), we can similarly derive the following:  
	
	\[
	\| \Phi(v_1) - \Phi(v_2) \|_{Z^{\theta}_{\frac{1}{2}, s}} \leq C \| \big((v_1 + v_2)(v_1 - v_2)\big)_x \|_{Z^{\theta}_{-\frac{1}{2}, s}} + C \| \big((v_1 - v_2)\zeta\big)_x \|_{Z^{\theta}_{-\frac{1}{2}, s}}.  
	\]  
	Applying properties \ref{bilinear_est} and \ref{H_s+5_in_Bourgain} of Lemma \ref{Bourgain_space},  
	\begin{align*}
		\| \Phi(v_1) - \Phi(v_2) \|_{Z^{\theta}_{\frac{1}{2}, s}} \leq &C \theta^{\alpha} \Big( \|v_1\|_{Z^{\theta}_{\frac{1}{2}, s}} + \|v_2\|_{Z^{\theta}_{\frac{1}{2}, s}} + \|\zeta\|_{X^{T}_{\frac{1}{2}, s}} \Big) \|v_1 - v_2\|_{Z^{\theta}_{\frac{1}{2}, s}}. \\
		\leq &C_3 \theta^{\alpha} \Big( \|v_1\|_{Z^{\theta}_{\frac{1}{2}, s}} + \|v_2\|_{Z^{\theta}_{\frac{1}{2}, s}} + \|\zeta\|_{s + 5} \Big) \|v_1 - v_2\|_{Z^{\theta}_{\frac{1}{2}, s}}, 
	\end{align*}
	
	where \(C_3\) is independent of \(\theta\).  
	
	Define  
	\[
	R = 2 C_1 \Big(1 + \|u_0\|_s + \|\varphi\|_{L^2(0, T; H^s(\mathbb{T}))} + \|\zeta\|^2_{s + 5}\Big).  
	\]  
	
	For \(v, v_1, v_2 \in B_{Z^{\theta}_{\frac{1}{2}, s}}(R) := \left\{ u \in Z^{\theta}_{\frac{1}{2}, s} \cap L^2(0, \theta; L^2_0(\mathbb{T})); \|u\|_{Z^{\theta}_{\frac{1}{2}, s}} \leq R \right\}\), we have  
	\[
	\|\Phi(v)\|_{Z^{\theta}_{\frac{1}{2}, s}} \leq \frac{R}{2} + C_2 \theta^{\alpha} R^2,  
	\]  
	\[
	\|\Phi(v_1) - \Phi(v_2)\|_{Z^{\theta}_{\frac{1}{2}, s}} \leq C_3 \theta^{\alpha} \Big(2R + \|\zeta\|_{s + 5}\Big) \|v_1 - v_2\|_{Z^{\theta}_{\frac{1}{2}, s}}.  
	\]  
	
	Choose \(\theta\) small enough such that  
	\[
	\theta^{\alpha} \max\{C_2 R, C_3(2R + \|\zeta\|_{s + 5})\} \leq \frac{1}{2}.  
	\]  
	Then,  
	\[
	\|\Phi(v)\|_{Z^{\theta}_{\frac{1}{2}, s}} \leq R, \quad \|\Phi(v_1) - \Phi(v_2)\|_{Z^{\theta}_{\frac{1}{2}, s}} \leq \frac{1}{2} \|v_1 - v_2\|_{Z^{\theta}_{\frac{1}{2}, s}}.  
	\]  
	
	By the fixed-point theorem, \(\Phi\) has a unique fixed point \(\Tilde{u} \in B_{Z^{\theta}_{\frac{1}{2}, s}}(R)\). Consequently, by property \eqref{Bourgain_space_in_continuous} from Lemma \ref{Bourgain_space}, the system \eqref{u_tilde_equn} has a unique solution \(\Tilde{u} \in C([0, \theta]; H_0^s(\mathbb{T}))\) for sufficiently small \(\theta\). Equivalently, the system \eqref{xtd_equn} has a unique solution \(u \in C([0, \theta]; H^s(\mathbb{T}))\) for small \(\theta\).


	To extend this local solution globally, we have to establish the following a priori estimate:  
	\begin{align}\label{a_priori_est}  
		\sup_{t \in [0, T]} \|u(t)\|_s \leq C, \quad \forall s \in \mathbb{N},  
	\end{align}  
	where \(C > 0\) depends on \(T, s, \|u_0\|_s, \|\zeta\|_{s + 7}\), and \(\|\varphi\|_{L^2(0, T; H^s(\mathbb{T}))}\).  
	
	Case \(s = 0\):   Multiplying \eqref{xtd_equn} by \(2u\) and integrating over \(\mathbb{T}\), we obtain  
	\[
	\frac{d}{dt} \| u\|^2 \leq C \Big( \|\zeta\|_5 \|u\| + \|\zeta\|_3 \|u\| + \|\zeta\|_5 \|u\|^2 + \|\zeta\|_5^2 \|u\| + \|\varphi\| \|u\| \Big).
	\]  
	Using the fact that \(\zeta \in H^5(\mathbb{T})\) and \(\varphi \in L^2(0, T; L^2_0(\mathbb{T}))\), and applying the inequality \(ab \leq \frac{a^2}{2} + \frac{b^2}{2}\), we have  
	\[
	\frac{d}{dt} \|u\|^2 \leq C_1 \Big(1 + \|\zeta\|_5 \Big) \|u\|^2 + C_2.
	\]  
	Applying Grönwall's inequality gives  
	\begin{align}\label{S=0_inequality}
		\sup_{t \in [0, T]} \|u(t)\|^2 \leq C,
	\end{align}  
	where \(C > 0\) depends on \(T\), \(\|\zeta\|_5\), \(\|\varphi\|_{L^2(0, T; L^2_0(\mathbb{T}))}\), and \(\|u_0\|\).

	Case \(s = 2\):   Multiplying \eqref{xtd_equn} with \( \mathcal{I} := u^2 + 2u_{xx} - 2u_{xxxx} \), we obtain  
	\[
	\begin{aligned}
		&\Big\langle \mathcal{I}, -u_t \Big\rangle = \frac{d}{dt} \left(- \frac{1}{3} \int_{\mathbb{T}} u^3 \, dx + \| u_x(t)\|^2 + \| u_{xx}(t)\|^2 \right), \\
		&\Big\langle \mathcal{I}, - (u + \zeta)_{xxxxx} \Big\rangle = 2 \int_{\mathbb{T}} u u_x u_{xxxx} \, dx - \int_{\mathbb{T}} u^2 \zeta_{xxxxx} \, dx - 2 \int_{\mathbb{T}} u_{xx} \zeta_{xxxxx} \, dx + 2 \int_{\mathbb{T}} u_{xxxx} \zeta_{xxxxx} \, dx, \\
		&\Big\langle \mathcal{I}, (u + \zeta)_{xxx} \Big\rangle = - 2 \int_{\mathbb{T}} u u_x u_{xx} \, dx + \int_{\mathbb{T}} \Big( u^2 + 2 u_{xx} - 2 u_{xxxx} \Big) \zeta_{xxx} \, dx, \\
		&\Big\langle \mathcal{I}, u (u + \zeta)_x \Big\rangle = 2 \int_{\mathbb{T}} u u_x u_{xx} \, dx - 2 \int_{\mathbb{T}} u u_x u_{xxxx} \, dx + \int_{\mathbb{T}} \Big( u^2 + 2 u_{xx} - 2 u_{xxxx} \Big) u \zeta_x \, dx, \\
		&\Big\langle \mathcal{I}, \zeta (u + \zeta)_x \Big\rangle = \int_{\mathbb{T}} \Big( u^2 + 2 u_{xx} - u_{xxxx} \Big) \zeta u_x \, dx + \int_{\mathbb{T}} \Big( u^2 + 2 u_{xx} - u_{xxxx} \Big) \zeta \zeta_x \, dx, \\
		&\Big\langle \mathcal{I}, -\varphi \Big\rangle = \int_{\mathbb{T}} \Big( u_{xxxx} - u^2 - 2 u_{xx} \Big) \varphi \, dx.
	\end{aligned}
	\]
	
	\noindent Combining all the terms above, we deduce:
	\[
	\begin{aligned}
		&\frac{d}{dt} \left(- \frac{1}{3} \int_{\mathbb{T}} u^3 \, dx + \| u_x(t)\|^2 + \| u_{xx}(t)\|^2 \right) \\
		&\quad = - \int_{\mathbb{T}} \Big( u^2 + 2 u_{xx} - 2 u_{xxxx} \Big) \zeta_{xxxxx} \, dx + \int_{\mathbb{T}} \Big( u^2 + 2 u_{xx} - 2 u_{xxxx} \Big) \zeta_{xxx} \, dx \\
		&\qquad + \int_{\mathbb{T}} \Big( u^2 + 2 u_{xx} - 2 u_{xxxx} \Big) u \zeta_x \, dx + \int_{\mathbb{T}} \Big( u^2 + 2 u_{xx} - u_{xxxx} \Big) \zeta u_x \, dx \\
		&\qquad + \int_{\mathbb{T}} \Big( u^2 + 2 u_{xx} - u_{xxxx} \Big) \zeta \zeta_x \, dx + \int_{\mathbb{T}} \Big( u_{xxxx} - u^2 - 2 u_{xx} \Big) \varphi \, dx \\
		&\quad =: \mathcal{J}_1 + \mathcal{J}_2 + \mathcal{J}_3 + \mathcal{J}_4 + \mathcal{J}_5 + \mathcal{J}_6.
	\end{aligned}
	\]
	
	\noindent Using Sobolev, Cauchy–Schwarz, and Poincaré inequalities, we estimate \( |\mathcal{J}_i| \) as follows:
	\[
	\begin{aligned}
		|\mathcal{J}_1| &\lesssim \| \zeta_{xxxxx} \|_{\infty} \|u\|^2 + \| \zeta \|_5 \| u_{xx} \| + \| \zeta \|_7 \| u_{xx} \|, \\
		|\mathcal{J}_2| &\lesssim \| \zeta_{xxx} \|_{\infty} \|u\|^2 + \| \zeta \|_3 \| u_{xx} \| + \| \zeta \|_5 \| u_{xx} \|, \\
		|\mathcal{J}_3| &\lesssim \|u\|_{\infty} \| \zeta_x \|_{\infty} (\|u\|^2 + \| u_{xx} \|) + \| \zeta_{xx} \|_{\infty} \| u_x \| \| u_{xx} \| + \| \zeta_x \| \| u_{xx} \|^2, \\
		|\mathcal{J}_4| &\lesssim \|u\|_{\infty} \| \zeta_x \| \| u_x \|^2 + \| \zeta \|_{\infty} \| u_{xx} \|^2 + (\| \zeta_x \|_{\infty} + \| \zeta_{xx} \|_{\infty}) \| u_{xx} \|^2, \\
		|\mathcal{J}_5| &\lesssim \|u\|_{\infty} \| \zeta \|^2 \| u_x \| + \| \zeta_x \|_{\infty} \| \zeta \| \| u_{xx} \| + \| \zeta_{xxx} \|_{\infty} (\| \zeta_x \| + \| \zeta \|) \| u_{xx} \|, \\
		|\mathcal{J}_6| &\lesssim \|u\|_{\infty} \|u\| \| \varphi \| + \| \varphi \| \| u_{xx} \| + \| \varphi_{xx} \| \| u_{xx} \|.
	\end{aligned}
	\]
	
	\noindent Combining these estimates, we have:
	\[
	\frac{d}{dt} \left(- \frac{1}{3} \int_{\mathbb{T}} u^3 \, dx + \| u_x(t) \|^2 + \| u_{xx}(t) \|^2 \right) \leq C \Big( 1 + \| u_x \|^2 + \| u_{xx} \|^2 + \| \varphi \|_2^2 \Big).
	\]

	Integrating over the interval \([0, t]\) and using \eqref{S=0_inequality} along with the Sobolev inequality, we obtain:  
	\begin{align*}
		\|u_x(t)\|^2 + \|u_{xx}(t)\|^2 & \leq C \Bigg( \|\partial_x u_0\|^2 + \|\partial_{xx} u_0\|^2 + \int_{\mathbb{T}} |u_0|^3 \, dx + \int_{\mathbb{T}} |u(t)|^3 \, dx \\
		& \hspace{2cm} + \int_0^t \Big(1 + \|u_x(s)\|^2 + \|u_{xx}(s)\|^2 + \|\varphi(s)\|_2^2 \Big) \, ds \Bigg) \\
		& \leq C \|u(t)\|_\infty \|u(t)\|^2 + C \Bigg( 1 + \int_0^t \Big( \|u_x(s)\|^2 + \|u_{xx}(s)\|^2 \Big) \, ds \Bigg) \\
		& \leq C \|u_x(t)\| \|u(t)\|^2 + C \Bigg( 1 + \int_0^t \Big( \|u_x(s)\|^2 + \|u_{xx}(s)\|^2 \Big) \, ds \Bigg) \\
		& \leq \frac{1}{2} \|u_x(t)\|^2 + C \|u(t)\|^4 + C \Bigg( 1 + \int_0^t \Big( \|u_x(s)\|^2 + \|u_{xx}(s)\|^2 \Big) \, ds \Bigg).
	\end{align*}
	
	Simplifying further, we deduce:
	\[
	\|u_x(t)\|^2 + \|u_{xx}(t)\|^2 \leq C \Bigg( 1 + \int_0^t \Big( \|u_x(s)\|^2 + \|u_{xx}(s)\|^2 \Big) \, ds \Bigg).
	\]
	
	By applying Gronwall's inequality, it follows that:
	\[
	\|u_x(t)\|^2 + \|u_{xx}(t)\|^2 \leq C, \quad \forall t \in [0, T].
	\]
	
	Finally, combining this result with \eqref{S=0_inequality}, we conclude:
	\begin{align}\label{s=2_ineq}
		\sup_{t \in [0, T]} \|u(t)\|_2^2 \leq C.
	\end{align}
	
	Case \(s = 1\): This case follows from \eqref{S=0_inequality}, \eqref{s=2_ineq} and nonlinear interpolation (see \cite{Tartar_1972, Bergh_1976}).

	Case \(s \geq 3\):   We will establish \eqref{a_priori_est} using mathematical induction. Assume that \eqref{a_priori_est} holds for \(s - 1 \geq 2\). Multiplying \eqref{xtd_equn} by \(\partial_x^{2s} u\), we get  
	\[
	\frac{1}{2} \frac{d}{dt} \|\partial_x^s u\|^2 \leq \|\zeta\|_{s+5} \|\partial_x^s u\| + \frac{1}{2} \left| \left\langle \partial_x^{s+1}(u^2),\ \partial_x^s u \right\rangle \right| + \left| \left\langle \partial_x^{s+1}(\zeta u),\ \partial_x^s u \right\rangle \right| + C \|\zeta\|_{s+1}^2 \|\partial_x^s u\| + \|\varphi\|_s \|\partial_x^s u\|.
	\]  
	
	It is straightforward to show that  
	\[
	\left| \left\langle \partial_x^{s+1}(u^2),\ \partial_x^s u \right\rangle \right| \leq C(s) \sum_{m=0}^{s+1} \left| \int_{\mathbb{T}} \partial_x^m u\ \partial_x^{s+1-m} u\ \partial_x^s u\ dx \right|.
	\]  
	For \(m = 0\) or \(m = s+1\),
	using Sobolev inequalities,  
	\[
	\left| \int_{\mathbb{T}} u\ \partial_x^{s+1} u\ \partial_x^s u\ dx \right| = \frac{1}{2} \left| \int_{\mathbb{T}} \partial_x u\ (\partial_x^s u)^2\ dx \right| \leq C \|\partial_x u\|_{\infty} \|\partial_x^s u\|^2 \leq C \|u\|_2 \|\partial_x^s u\|^2.
	\]  
	For \(m = 1\) or \(m = s\), again, using Sobolev inequalities,  
	\[
	\left| \int_{\mathbb{T}} \partial_x u\ \partial_x^s u\ \partial_x^s u\ dx \right| \leq \|\partial_x u\|_{\infty} \|\partial_x^s u\|^2 \leq C \|u\|_2 \|\partial_x^s u\|^2.
	\]  
	For \(2 \leq m \leq s-1\), using Sobolev, Cauchy–Schwarz, and Poincaré inequalities
	\begin{align*}
		\left| \int_{\mathbb{T}}  \partial^m_x u \ \partial^{s + 1- m}_x u \ \partial^s_x u \ dx \right| &  \le \| \partial^m_x u \|_{{\infty}} \int_{\mathbb{T}} | \partial^{s + 1 - m}_x u| | \partial^s_x u | dx\\
		& \le  C \| u\|_{m + 1} \| \partial^{s + 1 - m}_x u \| \| \partial^s_x u\| \\
		& \le C \| u\|_s \| u\|_{s - 1} \| \partial^s_x u\| \\
		& \le C \Bigg( \|u\| + \| \partial_x^s u\| \Bigg) \| u\|_{s - 1} \| \partial^s_x u\|. 
	\end{align*}
	From the above, we deduce:  
	\[
	\left| \left\langle \partial_x^{s+1}(u^2),\ \partial_x^s u \right\rangle \right| \leq C \Big(\|u\|_2 \|\partial_x^s u\|^2 + \|u\| \|u\|_{s-1} \|\partial_x^s u\| + \|u\|_{s-1} \|\partial_x^s u\|^2\Big).
	\]  
	Similarly, we can show:  
	\[
	\left| \left\langle \partial_x^{s+1}(\zeta u),\ \partial_x^s u \right\rangle \right| \leq C \Big(\|\zeta\|_{s+2} \|u\| \|\partial_x^s u\| + \|\zeta\|_{s+2} \|\partial_x^s u\|^2\Big).
	\]  
	
	Using the induction hypothesis that \eqref{a_priori_est} holds for \(s-1\), we conclude:  
	\[
	\frac{d}{dt} \|\partial_x^s u\|^2 \leq C (1 + \|\varphi\|_s^2 + \|\partial_x^s u\|^2).
	\]  
	Applying Gronwall’s inequality as in the earlier steps, we obtain:  
	\[
	\|\partial_x^s u(t)\|^2 \leq C, \quad \forall t \in [0, T].
	\]  
	
	From \eqref{S=0_inequality}, \eqref{s=2_ineq}, and the above bound, we conclude:  
	\begin{align}\label{s_est}
		\sup_{t \in [0, T]} \| u(t) \|^2_s \le C, \quad \quad s \ge 3.
	\end{align}
	Thus, \eqref{a_priori_est} follows from \eqref{S=0_inequality}–\eqref{s_est}, and we establish the existence of a solution on \([0, T]\) for \eqref{xtd_equn}, i.e.,  
	the system \eqref{xtd_equn} has a solution \(u \in C([0, T]; H^s(\mathbb{T}))\).
	
	To establish uniqueness, let \( u_1, u_2 \in C([0, T]; H^s) \) be two solutions of \eqref{xtd_equn}. Define \( v = u_1 - u_2 \); then \( v \) satisfies the following equation:  
	\[
	\begin{cases}
		v_t - v_{xxxxx} + v_{xxx} + u_1 v_x + (u_2)_x v + (\zeta v)_x = 0, & \text{in } (0, T) \times \mathbb{T}, \\
		v(0) = 0, & \text{in } \mathbb{T}.
	\end{cases}
	\]  
	
	Multiplying this equation by \( 2v \) and integrating over \(\mathbb{T}\), we use the Sobolev and Cauchy–Schwarz inequalities to estimate the terms. This leads to:  
	\[
	\frac{d}{dt} \|v\|^2 \leq C \big(1 + \|\zeta\|_2 + \|u_1\|_2 + \|u_2\|_2\big) \|v\|^2.
	\]  
	
	By Gronwall's inequality, this implies \( v \equiv 0 \). Therefore, \( u_1 \equiv u_2 \), establishing the uniqueness of the solution.
\end{proof}

\subsection{Proof of Proposition \ref{prp_continty}}
\begin{proof}
	Let \( u = \mathcal{R}(u_0, 0, \varphi) \) and \( v = \mathcal{R}(v_0, 0, \varphi) \), with \([u_0] = [v_0] = a\). Define \(\Tilde{u} = u - a\) and \(\Tilde{v} = v - a\). Using \eqref{a_priori_est}, we have  
	\begin{align}\label{est_uv}
		\sup_{t \in [0, T]} \|\Tilde u(t)\|_s \leq C, \quad \sup_{t \in [0, T]} \|\Tilde v(t)\|_s \leq C,
	\end{align}  
	where \( C > 0 \) depends on \( T, s, \|u_0\|_s, \|v_0\|_s \), and \( \|\varphi\|_{L^2(0, T; H^s(\mathbb{T}))} \).
	
	For any \( n \in \mathbb{N}^* \), divide the interval $[0,T]$ into subintervals \( I_k = \left( \frac{kT}{n}, \frac{(k+1)T}{n} \right) \) for \( k = 0, 1, \dots, n-1 \). Now, we introduce the following mapping:  
	\[
	\Phi({z})(t) = \mathcal{W}(t) \Tilde u\left(\frac{kT}{n}\right) + \int_{\frac{kT}{n}}^t \mathcal{W}(t - s) \left( -{z}{z}_x + \varphi \right)(s) \, ds, \quad t \in I_k, \; k = 0, 1, \dots, n-1,
	\]  
	where \(\{\mathcal{W}(t)\}\) is the strongly continuous unitary group on \(L^2(\mathbb{T})\) generated by the linear operator \(\mathcal{A}\).
	
	Using a method similar to the proof of Proposition \ref{prp_existance}, for \({z}, {z}_1, {z}_2 \in Z_{\frac{1}{2}, s}(I_k) \cap L^2(I_k, L_0^2(\mathbb{T}))\), we can deduce from \eqref{est_uv} that  
	\begin{align}
		\| \Phi({z}) \|_{Z_{\frac{1}{2}, s}(I_k)} &\leq C \left( \left\| \Tilde u\left(\frac{kT}{n}\right) \right\|_s + \|\varphi\|_{L^2(0, T; H^s(\mathbb{T}))} \right) + C \left(\frac{T}{n}\right)^{\alpha} \|{z}\|^2_{Z_{\frac{1}{2}, s}(I_k)} \notag\\ 
		&\leq C_1 + C_2 \left(\frac{T}{n}\right)^{\alpha} \|{z}\|^2_{Z_{\frac{1}{2}, s}(I_k)}, \label{c1est}\\
		\| \Phi({z}_1) - \Phi({z}_2) \|_{Z_{\frac{1}{2}, s}(I_k)} &\leq C_3 \left(\frac{T}{n}\right)^{\alpha} \left( \|{z}_1\|_{Z_{\frac{1}{2}, s}(I_k)} + \|{z}_2\|_{Z_{\frac{1}{2}, s}(I_k)} \right) \|{z}_1 - {z}_2\|_{Z_{\frac{1}{2}, s}(I_k)},\label{c3est}
	\end{align}  
	where, \( C_1, C_2, \) and \( C_3 \) are constants independent of \( n \) and \( k \).
	
	Define  
	\begin{equation}\label{defR}
		R = 2C_1.
	\end{equation}
	
	For \( {z}, {z}_1, {z}_2 \in B_{Z_{\frac{1}{2}, s}(I_k)}(R) := \left\{ u \in Z_{\frac{1}{2}, s}(I_k) \cap L^2(I_k; L_0^2(\mathbb{T})) \; \big| \; \|u\|_{Z_{\frac{1}{2}, s}(I_k)} \leq R \right\}, \)  
	we select \( N_1 \in \mathbb{N}^* \) such that for \( n \geq N_1 \),
	\[
	\left(\frac{T}{n}\right)^{\alpha} R \max\{C_2, 2C_3\} \leq \frac{1}{2}.
	\]  
	Using \eqref{c1est}, \eqref{c3est}, and \eqref{defR}, this ensures that
	\[
	\|\Phi({z})\|_{Z_{\frac{1}{2}, s}(I_k)} \leq R,
	\]  
	\[
	\|\Phi({z}_1) - \Phi({z}_2)\|_{Z_{\frac{1}{2}, s}(I_k)} \leq \frac{1}{2} \|{z}_1 - {z}_2\|_{Z_{\frac{1}{2}, s}(I_k)}.
	\]  
	
	Thus, \( \Phi \) has a unique fixed point in \( B_{Z_{\frac{1}{2}, s}(I_k)}(R) \), which coincides to \( \Tilde u \) on \( I_k \times \mathbb{T} \). Furthermore,  
	\begin{equation}\label{esttu}
		\|\Tilde u\|_{Z_{\frac{1}{2}, s}(I_k)} \leq C, \quad \forall k = 0, 1, \dots, n-1, \quad n \geq N_1,
	\end{equation}  
	where \( C > 0 \) depends on \( T, s, \|u_0\|_s \), and \( \|\varphi\|_{L^2(0, T; H^s(\mathbb{T}))} \), but is independent of \( n \) and \( k \).  
	
	Similarly, we can find \( N_2 \in \mathbb{N}^* \) such that  
	\begin{equation}\label{esttv}
		\|\Tilde v\|_{Z_{\frac{1}{2}, s}(I_k)} \leq C, \quad \forall k = 0, 1, \dots, n-1, \quad n \geq N_2,
	\end{equation}  
	where \( C > 0 \) depends on \( T, s, \|v_0\|_s \), and \( \|\varphi\|_{L^2(0, T; H^s(\mathbb{T}))} \), but is independent of \( n \) and \( k \).
	
	Let $w=u-v=\tilde u-\tilde v$, then $w$ satisfies the following equation:
	\begin{align}\label{equw}
		\begin{cases}
			w_t - w_{xxxxx} + w_{xxx} + a w_x + \frac{1}{2}\left( \left( \tilde u+\tilde v\right)w \right)_x  = 0, & \text{in } (0, T) \times \mathbb{T}, \\
			w(0) = u_0-v_0, & \text{in } \mathbb{T}.
		\end{cases}
	\end{align}
	Additionally, $\left[ w\right]=0 $. Rewriting this equation in its integral form gives:
	
	\[
	w(t) = \mathcal{W}(t) w\left(\frac{kT}{n}\right) -\frac{1}{2} \int_{\frac{kT}{n}}^t \mathcal{W}(t - s) \left( \left( \tilde u+\tilde v\right)w \right)_x(s) \, ds, \quad t \in I_k, \; k = 0, 1, \dots, n-1.
	\]  
	Using estimates \eqref{esttu} and \eqref{esttv}, along with properties \eqref{semigrp_est} and \eqref{bilinear_est} from Lemma \ref{Bourgain_space}, we obtain the following bound for $k = 0, 1, \dots, n-1$ with $n\geq\max \left\lbrace N_1,N_2\right\rbrace $:
	
	\begin{align*}
		\| w \|_{Z_{\frac{1}{2}, s}(I_k)} &\leq C  \left\| w\left(\frac{kT}{n}\right) \right\|_s + C \left(\frac{T}{n}\right)^{\alpha}\left( \| \tilde u \|_{Z_{\frac{1}{2}, s}(I_k)}+\| \tilde v \|_{Z_{\frac{1}{2}, s}(I_k)}\right)  \|{w}\|_{Z_{\frac{1}{2}, s}(I_k)} \notag\\ 
		&\leq C_4 \left\| w\left(\frac{kT}{n}\right) \right\|_s+ C_5 \left(\frac{T}{n}\right)^{\alpha} \|{w}\|_{Z_{\frac{1}{2}, s}(I_k)},
	\end{align*}
	where \( C_5 \) depends on \( T, s, \|u_0\|_s, \|v_0\|_s \), and \( \|\varphi\|_{L^2(0, T; H^s(\mathbb{T}))} \), but is independent of \( n \) and \( k \).  We can find \( N_3 \in \mathbb{N}^* \) such that for \( n \geq N_3 \), 
	
	\[
	\left(\frac{T}{n}\right)^{\alpha} C_5 \leq \frac{1}{2}.
	\]
	
	This implies:
	
	\[
	\| w \|_{Z_{\frac{1}{2}, s}(I_k)} \leq 2 C_4 \left\| w\left(\frac{kT}{n}\right) \right\|_s.
	\]
	
	Let \( N = \max\{N_1, N_2, N_3\} \). By applying Lemma \ref{Bourgain_space}, for \( k = 0, 1, \dots, N-1 \), we obtain:
	
	\[
	\sup_{t \in \overline{I_k}} \|w(t)\|_s \leq C \| w \|_{Z_{\frac{1}{2}, s}(I_k)} \leq 2 C C_4 \left\| w\left(\frac{kT}{n}\right) \right\|_s = C_6 \left\| w\left(\frac{kT}{n}\right) \right\|_s,
	\]
	
	where \( C_6 > 1 \) is a constant depending on \( T, N, s \) but independent of \( k \). Repeating this process iteratively, we have:
	
	\[
	\sup_{t \in \overline{I_k}} \|w(t)\|_s \leq C_6 \left\| w\left(\frac{kT}{n}\right) \right\|_s \leq C_6 \sup_{t \in \overline{I_{k-1}}} \|w(t)\|_s \leq C_6^2 \left\| w\left(\frac{(k-1)T}{n}\right) \right\|_s,
	\]
	
	and so on. This leads to:
	
	\[
	\sup_{t \in \overline{I_k}} \|w(t)\|_s \leq C_6^{k+1} \|w(0)\|_s,
	\]
	
	for \( k = 0, 1, \dots, N-1 \). Consequently, we find:
	
	\[
	\sup_{t \in [0, T]} \|u(t) - v(t)\|_s \leq C_6^N \|u_0 - v_0\|_s.
	\]
	
	Thus, the Lipschitz property \eqref{lips_property} follows.
\end{proof}

\subsection{Proof of Proposition \ref{prp_limit}}
 \begin{proof}
	Let \(\delta > 0\). Recall the equation \eqref{xtd_dlta_equn}:  
	\begin{align}\label{asymp_dlta_equn}
		\begin{cases}
			u_t - (u + \delta^{-\frac{1}{2}} \zeta)_{xxxxx} + (u + \delta^{-\frac{1}{2}} \zeta)_{xxx} + (u + \delta^{-\frac{1}{2}} \zeta)(u + \delta^{-\frac{1}{2}} \zeta)_x = \delta^{-1} \eta, & \text{in } (0,T) \times \mathbb{T}, \\
			u(0, \cdot) = u_0(\cdot), & \text{in } \mathbb{T}.
		\end{cases}
	\end{align}  
	
	For \( u_0 \in H^{s + 7}(\mathbb{T}) \), \(\zeta \in H_0^{s + 8}(\mathbb{T}) \), and \(\eta \in H_0^{s + 7}(\mathbb{T}) \), Proposition \ref{prp_existance} ensures that for each \(\delta > 0\), equation \eqref{asymp_dlta_equn} has a unique solution \( u \in C([0, T]; H^s) \).  
	
	Next, we define the functions:  
	\[
	w(t) := u_0 + t(\eta - \zeta \zeta_x), \quad z(t) := u(\delta t) - w(t),
	\]  
	where \( t \in \left[0, \min\{1, T / \delta\}\right] \).  
	
	Substituting into the governing equation, \(z\) satisfies the following equation:  
	{\small 
		\begin{align}\label{z_equn}
			\begin{cases}
				z_t - \delta \left(  (z + w + \delta^{-\frac{1}{2}} \zeta)_{xxxxx} + (z + w + \delta^{-\frac{1}{2}} \zeta)_{xxx} + z(z + w + \delta^{-\frac{1}{2}} \zeta)_x + w(z + w + \delta^{-\frac{1}{2}} \zeta)_x \right)  + \delta^{\frac{1}{2}} \zeta(z + w)_x = 0, \\
				z(0) = 0.
			\end{cases}
	\end{align}  }
	We aim to show that  
	\[
	\sup_{t \in [0, \min\{1, T/\delta\}]} \| z(t) \|_s^2 \leq C \delta^{\frac{1}{2}}, \quad \text{for } s \in \mathbb{N}.
	\]  
	
	Case \(s = 0\):  
	Following the approach used in \eqref{S=0_inequality}, we take the scalar product of equation \eqref{z_equn} in \(L^2(\mathbb{T})\) with \(2z\). Using standard estimates, we obtain  
	
	\begin{align}\label{s=0_z_est}
		\sup_{t \in [0, \min\{1, T/\delta\}]} \| z(t) \|^2 \leq C \delta^{\frac{1}{2}}.  
	\end{align}  
	
	Case \(s = 2\):  
	Taking the scalar product of equation \eqref{z_equn} in \(L^2(\mathbb{T})\) with \(\mathcal{I} := z^2 + 2z_{xx} - 2z_{xxxx}\), and using the analysis from Proposition \ref{prp_existance}, along with \eqref{s=0_z_est}, we derive  
	\[
	\frac{d}{dt} \left( -\frac{1}{3} \int_{\mathbb{T}} z^3 \, dx + \| z_x(t) \|^2 + \| z_{xx}(t) \|^2 \right) \leq C \delta^{\frac{1}{2}} \Big( 1 + \| z_x \|^2 + \| z_{xx} \|^2 \Big).  
	\]  
	
	Using \( \delta < \delta^{\frac{1}{2}} \) for \( \delta \in (0, 1) \), integrating over \([0, t]\), and applying \eqref{s=0_z_est}, we obtain  
	\[
	\| z_x(t) \|^2 + \| z_{xx}(t) \|^2 \leq C \delta^{\frac{1}{2}} \Bigg( 1 + \int_0^t \Big( \| z_x(s) \|^2 + \| z_{xx}(s) \|^2 \Big) \, ds \Bigg).  
	\]  
	
	Applying Gronwall's inequality yields  
	\[
	\| z_x(t) \|^2 + \| z_{xx}(t) \|^2 \leq C \delta^{\frac{1}{2}}, \quad \forall t \in [0, \min\{1, T/\delta\}].  
	\]  
	
	Combining this with \eqref{s=0_z_est}, we establish  
	\begin{align}\label{s=2_z_est}
		\sup_{t \in [0, \min\{1, T/\delta\}]} \| z(t) \|_2^2 \leq C \delta^{\frac{1}{2}}.  
	\end{align}  
	
	Case \(s = 1\): Similar arguments involving the integration by parts and Sobolev embeddings lead to an intermediate estimate. Using interpolation between \(s = 0\) and \(s = 2\), it can be shown that  
	\begin{align}\label{s=1_z_est}
		\sup_{t \in [0, \min\{1, T/\delta\}]} \| z(t) \|_1^2 \leq C \delta^{\frac{1}{2}}.  
	\end{align}

	Case \(s \geq 3\):  
	Using induction and the estimates from \eqref{s=0_z_est}–\eqref{s=1_z_est}, we proceed as in Proposition \ref{prp_existance} to deduce that  
	\[
	\sup_{t \in [0, \min\{1, T/\delta\}]} \| z(t) \|_s^2 \leq C \delta^{\frac{1}{2}}, \quad \forall s \geq 3.  
	\]  
	
	Choose \(\delta_0 \in (0, 1)\) such that \(T / \delta_0 > 1\). For any \(\delta \in (0, \delta_0)\), we then have for all \(t \in [0, 1]\) and \(s \in  \mathbb{N}\):  
	\[
	\| z(t) \|_s^2 \leq C \delta^{\frac{1}{2}}.  
	\]  
	
	In particular,  
	\[
	\| z(1) \|_s^2 \leq C \delta^{\frac{1}{2}},  
	\]  
	which implies  
	\[
	\mathcal{R}_{\delta}(u_0, \delta^{-\frac{1}{2}} \zeta, \delta^{-1} \eta) \to u_0 + \eta - \zeta \zeta_x \quad \text{in } H^s(\mathbb{T}) \text{ as } \delta \to 0^+.  
	\]  
	This completes the proof.
\end{proof}
\section*{Acknowledgments}  

The authors extend their heartfelt gratitude to Shirshendu Chowdhury and Rajib Dutta for insightful discussions. Debanjit Mondal is deeply thankful for the support provided by the Integrated PhD Fellowship at IISER Kolkata. Sakil Ahamed acknowledges the financial assistance from the institute post-doctoral fellowship of IISER Kolkata. Additionally, we recognize the support of the FIST project (Project No. SR/FST/MS-II/2019/51) facilitated by the Department of Mathematics and Statistics, IISER Kolkata.
\bibliographystyle{plain}
\bibliography{reference}
\end{document}